\theoremstyle{plain}
\newtheorem{thm}{Theorem}[section]
\newtheorem{lem}[thm]{Lemma}
\newtheorem{prop}[thm]{Proposition}
\theoremstyle{definition}
\newtheorem{definition}[thm]{Definition}
\newtheorem{remark}[thm]{Remark}
\newtheorem*{remark*}{Remark}
\newtheorem*{thm*}{Theorem}
\newtheorem*{lem*}{Lemma}
\newcommand{\Spec}{\operatorname{Spec}}
\newcommand{\Proj}{\operatorname{Proj}}
\newcommand{\Ann}{\operatorname{Ann}}
\newcommand{\Cl}{\operatorname{Cl}}
\newcommand{\Span}{\operatorname{Span}}
\newcommand{\Sym}{\operatorname{Sym}}
\newcommand{\Br}{\operatorname{Br}}
\newcommand{\Coh}{\operatorname{Coh}}
\newcommand{\Cohproj}{\operatorname{Cohproj}}
\newcommand{\cone}{\operatorname{cone}}
\newcommand{\adj}{\operatorname{adj}}
\newcommand{\Hilb}{\operatorname{Hilb}}
\def\C {{\mathbb C}}
\def\P {{\mathbb P}}
\begin{document}
%\title{On Double Mirrors of Enriques Surfaces}
%\title{Non-Commutative Root Stacks Derived Equivalent to Enriques Surfaces}
%\title{Noncommutative Derived Partners of Enriques Surfaces}
\title{Derived Partners of Enriques Surfaces}
\author{Lev Borisov, Vernon Chan, Chengxi Wang}
\maketitle

\textbf{Abstract.} Let $V$ be a $6$-dimensional complex vector space with an involution $\sigma$ of trace $0$, and let $W \subset \Sym^2 V^\vee$ be a generic $3$-dimensional subspace of $\sigma$-invariant quadratic forms. To these data we can associate an Enriques surface as the $\sigma$-quotient of the complete intersection of the quadratic forms in $W$. We exhibit noncommutative Deligne-Mumford stacks together with sheaves of Azumaya algebras on them whose derived categories are equivalent to those of the Enriques surfaces. This provides a more accessible treatment of \cite[Theorem 6.16]{kp}. We also construct geometric realizations of the Brauer classes coming from these sheaves of Azumaya algebras which may be of independent interest.

\section{Introduction}

Derived categories are a fascinating invariant of algebraic varieties. Two algebraic varieties are called derived equivalent if their bounded derived categories of coherent sheaves are equivalent as triangulated categories \cite{huybrechts}. Bondal and Orlov  \cite{orlov} showed that a smooth projective variety $X$ whose canonical class  $K_X$ is ample or anti-ample is determined by its bounded derived category $\mathcal{D}^b(\Coh-X)$, so in these cases derived equivalence implies isomorphism. However in the Calabi-Yau case of $K_X = 0$ there are multiple constructions of non-trivial derived equivalences \cite{caldararu}. In a number of  cases, derived equivalences are related to string theory: for Calabi-Yau varieties $X, Y$ and $Z$, homological mirror symmetry predicts that if $X$ and $Y$ are both ``mirror'' to $Z$ (we say that $X$ and $Y$ are a double mirror pair), then $X$ and $Y$ are derived equivalent.

\medskip
Many examples of derived equivalences come from the Homological Projective Duality construction of Kuznetsov \cite{kuznetsov2}. In these examples, the derived partners of a Calabi-Yau variety can be slightly non-commutative and involve a DM-stack structure or a Brauer class. Of particular interest to us is the example of complete intersections of quadrics \cite{kuznetsov}, especially in dimension 2, which we describe briefly here. For $V \cong \mathbb{C}^6$ and $W \subset S^2V^\vee$ a  dimension $3$ subspace of homogeneous quadratic forms on $V$ such that the intersection $X$ of quadrics parametrized by $W$ is a complete intersection, there exists a sheaf of algebras $A$ on a double cover $Y$ of $\mathbb{P}W$ ramified over the locus of singular quadrics, such that the derived category  $(Y,A)$ is equivalent to the derived category of $X$:
\begin{equation}\label{dual-Cliff}
\mathcal{D}^b(\Coh-Y,A) \quad \simeq \quad \mathcal{D}^b(\Coh-X).
\end{equation}
In \cite{borisov}, Borisov and Li proposed a toric framework to generalize the above construction. In particular, they argued for the existence of a derived partner of Enriques surfaces (see \cite[Section 9.2]{borisov}), making use of the fact that generic complex Enriques surfaces can be obtained as a quotient of $(2,2,2)$-complete intersections in $\mathbb{P}^5$ by a fixed-point-free involution \cite{cossec}. In  this paper, we look into the case of Enriques surfaces in more detail. 

\medskip
As above, let $V$ be a $6$-dimensional complex vector space. However, it is now equipped with an involution $\sigma$ of trace zero. Let  $X$ be a K3 surface obtained as a $(2,2,2)$-complete intersection in $\mathbb PV\cong\mathbb{CP}^5$, with the defining space of quadrics $W$ being a generic dimension $3$ subspace of  the eigenvalue one subspace of $S^2V^\vee$. Then $\sigma$ induces a fixed-point-free involution on $X$ such that $X/\sigma$ is an Enriques surface.\footnote{ A generic Enriques surface can be constructed using this process \cite{cossec}.} On the dual side, the locus of singular quadrics in $\P W\cong\C\P^2$ is the union of two smooth cubic curves which intersect transversely. The corresponding double cover $Y\to \mathbb{P}^2$ has $9$ nodal singularities. We construct a smooth stacky resolution $\mathscr{Y}$ of $Y$. We are also interested in the quotient $\widehat{\mathscr{Y}}$  of $\mathscr{Y}$ by a lift of the covering involution of $Y\to \P W$, which is isomorphic to the $(2,2)$-root stack of $\P W$ along the aforementioned cubic curves.

\medskip
Our main results are the following.

\medskip\noindent
{\bf Theorem \ref{main_thm_1}}
There exists a sheaf of Azumaya algebras $\mathcal A$ on  $\mathscr{Y}$ so that there is a derived equivalence
\[
\mathcal{D}^b(\Coh-\mathscr{Y},\,\mathcal A) \quad \simeq \quad \mathcal{D}^b(\Coh-X).
\]

\medskip\noindent
{\bf Theorem \ref{main_thm_2}}
There exists a sheaf of Azumaya algebras $\widehat{\mathcal A}$ on  $\widehat{\mathscr Y}$ so that there is a derived equivalence
\[
\mathcal{D}^b(\Coh-\widehat{\mathscr Y},\,\widehat{\mathcal A}) \quad \simeq \quad \mathcal{D}^b(\Coh-X/\sigma).
\]

Here a sheaf of Azumaya algebras over a stack $\mathscr{X}$ is a coherent sheaf of associative algebras which is \'{e}tale locally isomorphic to a matrix algebra sheaf.

\medskip
In addition, we construct the geometric realization of the Brauer classes on $\mathscr{Y}$ and $\widehat{\mathscr Y}$ that correspond to $\mathcal A$ and $\widehat{\mathcal A}$ as Severi-Brauer varieties over the stacks $\mathscr{Y}$ and $\widehat{\mathscr Y}$.

\medskip
\textit{Structure of the paper.} In Section 2 we define the notations and list some properties of the quotient stacks involved. In Section 3 we construct the Azumaya algebras in question in terms of Clifford algebras and provide the proofs of the main theorems. In Section 4 we construct the associated Severi-Brauer varieties as geometric realizations of the Brauer classes. In Section 5 we discuss further research directions and open problems.

\medskip
\textit{Acknowledgements.} We thank Alexander Kuznetsov, Daniel Krashen and Franco Rota for insightful suggestions and discussions. We thank the anonymous referees for suggestions on improving the exposition and for catching minor mistakes in the first version of the paper. We note that the main results of this paper have also appeared in \cite{kp}, although our setup is more explicit. 
We do not know whether our equivalence of categories is the same as that of \cite{kp}.
%{\color{yellow} our example is a crepant resolution?}

\section{Quotient stacks associated to the K3 and Enriques surfaces}

In this section we define the schemes and stacks  $X$, $X/\sigma$, $\mathscr{Y}$ and $\widehat{\mathscr Y}$ that we will be using in the paper, and fix our notations in the process.

\medskip
Let $V$ be a $6$-dimensional complex vector space with coordinates $x_1^\pm, x_2^\pm, x_3^\pm\in V^\vee$ equipped with an involution $\sigma$ that fixes $x_i^+$ and sends $x_i^- \mapsto -x_i^-$. Let $W \subset S^2V^\vee$ be a general dimension $3$ subspace of homogeneous quadratic forms which are invariant under the involution. Each quadratic form $q$ in $W$ defines a quadric in $V$. Let $X \subset \mathbb{P}V$ be the zero locus of the quadric forms in $W$.  
We fix a basis $(q_1,q_2,q_3)$ of $W$ and consider  the K3 surface $X$ which is the complete intersection of $3$ quadrics $\{0=q_i\}, 1\leq i\leq 3$. If we take the quotient of $X$ by the involution $\sigma$ on $V$,  we obtain an Enriques surface $X/\sigma$. 

\medskip
On the double mirror side,\footnote{There is indeed a common mirror family, but we will not be using it.} let $u_1, u_2, u_3$ be coordinates on $W \cong \mathbb{C}^3$ so that every quadratic equation $q$ in $W$ can be written as $q=\sum_i u_i q_i$. As each $q \in W$ is invariant under the involution, $q$ can be written as $q = q^+ + q^-$ where each of $q^\pm$ involves $x_i^\pm$ only, and the matrix representation of $q$ is a block matrix consisting of two $3 \times 3$ matrices representing $q^+$ and $q^-$. Thus, $\det(q) = 0$ is a sextic equation on $W$ in $u_i$ and the curve $E$ it defines in $\mathbb{P}W$ is the union of two cubic curves $E_+$ and $E_-$ respectively defined by
\[
f_\pm \; := \; \det(q^\pm).
\]
As $W$ is assumed to be a general subspace, and the loci for which the rank of $q_+$ or $q_-$ drops to $1$ are cut out by the $2 \times 2$ minors of their associated matrices, we see that all nonzero quadratic forms in $W$ have rank $\geq 4$. 

\medskip
In the original setting, Kuznetsov considered the double cover $Y$ of $\mathbb{P}W$ ramified over $E$, which is generically a smooth sextic curve. In our case, the double cover would be singular because the ramification locus $E_+ \cup E_-$ has nodes, so we want to look at crepant desingularizations of it. There are at least $2$ options: to blow up the nodes which gives nine $(-2)$-curves; or to consider stack structures on the nodes. In this paper, we go with the second route, but the other option also appears interesting and may be related to construction of Enriques surfaces via logarithmic transformation of rational elliptic surfaces.

\medskip
Specifically, let us define the stacky resolution $\mathscr{Y}$ of the ramified double cover $Y$ as a global quotient stack. We first define the following algebras over the coordinate ring $\mathbb{C}[u] := \mathbb{C}[u_1, u_2, u_3]$ of $W$.
\begin{align*}
    A &:= \mathbb{C}[u_1, u_2, u_3, y_+, y_-] / \langle y_+^2-f_+,\, y_-^2-f_- \rangle, \\
    B &:= \mathbb{C}[u_1, u_2, u_3, y] / \langle y^2-f_+f_- \rangle.
\end{align*}
These algebras are related by the injective algebra homomorphism $B \to A$ which sends $u_i\mapsto u_i$ and  $y \mapsto y_+ y_-$. Also note that both $A$ and $B$ are free $\C[u]$ modules, with the generating sets $\{1,y_+,y_-,y_+y_-\}$ and $\{1,y\}$ respectively. Next, we define group actions of $\mathbb{C}_\lambda^*$, $\mathbb{C}_t^* \cong \mathbb{C}^*$ on $A$ and $B$ respectively:
\begin{align*}
    \lambda \cdot (u_1, u_2, u_3, y_+, y_-) &:= (\lambda^2 u_1, \lambda^2 u_2, \lambda^2 u_3, \lambda^3 y_+, \lambda^3 y_-), \\
    t \cdot (u_1, u_2, u_3, y) &:= (t u_1, t u_2, t u_3, t^3 y).
\end{align*}
Here we added subscripts $\lambda$ and $t$ under the groups to distinguish two different $\mathbb{C}^*$-actions. The two actions are related by the surjective group homomorphism  $\mathbb{C}_\lambda^* \to \mathbb{C}_t^*$ where $\lambda \mapsto t = \lambda^2$, which is compatible with the map $B \to A$. Passing to the quotient stacks, we obtain a map
\[
[ (\Spec A \backslash 0) / \mathbb{C}_\lambda^* ] \to [ (\Spec B \backslash 0) / \mathbb{C}_t^* ].
\]
Here and for the remainder of the paper, we abbreviate
\[
\Spec A \backslash 0 := (\Spec A) \,\backslash\, \{(u_1, u_2, u_3) = 0\} = (\Spec A) \,\backslash\, \{(u_1, u_2, u_3, y_+, y_-) = 0\},
\]
and do the same to $W\backslash 0$ etc. The $u_i$ coordinates will often be collectively referred to as $u$.

\medskip
The $\mathbb{C}_t^*$-action on the $u_i$ coordinates is just the scaling action on the projective space $\mathbb{P}W$. There are $\mathbb{N}$-gradings on $A$ and $B$ corresponding to the two $\mathbb{C}^*$-actions, explicitly:
\[
A: 
\begin{cases}
u_1, u_2, u_3: & \text{degree } 2 \\
y_+, y_-: & \text{degree } 3
\end{cases}
\quad \text{and} \quad
B: 
\begin{cases}
u_1, u_2, u_3: & \text{degree } 1 \\
y: & \text{degree } 3
\end{cases}.
\]
Then the ramified double cover $Y \to \mathbb{P}W$ can be realized as $\Proj B \to \mathbb{P}W$.

\begin{remark}
\label{rmk_grading}
The algebra $A$ cannot be made into a graded algebra over $\mathbb{C}[u]$ that respects the natural grading of $u_i$'s in $\mathbb{P}W$ (i.e. $u_i$'s have degree $1$), because then $y_\pm$ would have to have half-integer weight.
\end{remark}

We now explore the smoothness conditions on $\Spec A \backslash 0$.
\begin{prop}
\label{spec_a_smooth} The variety
$\Spec A \backslash 0$ is smooth if and only if $E_+$, $E_-$ are smooth and intersect transversely. In particular, it is smooth for generic choices of $W$.
\end{prop}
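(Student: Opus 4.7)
The plan is to apply the Jacobian criterion directly to the presentation
$A = \mathbb{C}[u_1,u_2,u_3,y_+,y_-]/\langle y_+^2 - f_+,\, y_-^2-f_-\rangle$, which exhibits $\Spec A$ as a subscheme of $\mathbb{A}^5$ defined by two equations. At a point $p = (u, y_+, y_-)$ with $u \neq 0$, the relevant Jacobian is
\[
J(p) \;=\; \begin{pmatrix} -\partial_{u_1}f_+ & -\partial_{u_2}f_+ & -\partial_{u_3}f_+ & 2y_+ & 0 \\ -\partial_{u_1}f_- & -\partial_{u_2}f_- & -\partial_{u_3}f_- & 0 & 2y_- \end{pmatrix},
\]
and since the expected dimension of $\Spec A \backslash 0$ is $3$, smoothness at $p$ is equivalent to $\rank J(p) = 2$. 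I would then split into cases according to which of $y_\pm$ vanishes.

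If both $y_+, y_- \neq 0$, the two rightmost columns give rank $2$ for free; these points are always smooth. If $y_+ = 0$ and $y_- \neq 0$, then $f_+(u) = 0$ and $f_-(u)\neq 0$, and rank $2$ is equivalent to $\nabla_u f_+(u) \neq 0$. Because $f_+$ is homogeneous of degree $3$ in the $u_i$ only, the affine cone $\{f_+ = 0\} \subset \mathbb{A}^3\backslash 0$ is smooth at $u$ exactly when $E_+ \subset \mathbb{P}W$ is smooth at $[u]$; as $p$ ranges, the condition amounts to $E_+$ being smooth at every point of $E_+ \setminus E_-$. The symmetric case $y_- = 0$, $y_+ \neq 0$ gives the corresponding statement for $E_-$.

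The interesting case is $y_+ = y_- = 0$, which forces $[u] \in E_+ \cap E_-$. Here $J(p)$ collapses to its $u$-block, and rank $2$ is equivalent to $\nabla_u f_+(u)$ and $\nabla_u f_-(u)$ being linearly independent in $\mathbb{C}^3$. Using Euler's relation $u \cdot \nabla_u f_\pm = 3 f_\pm(u) = 0$, each gradient lies in the $2$-dimensional hyperplane orthogonal to $u$, which is canonically the cotangent space $T^*_{[u]}\mathbb{P}W$, and represents the defining form of the projective tangent line to $E_\pm$ at $[u]$. Linear independence of the two gradients is therefore equivalent to both being nonzero (each $E_\pm$ smooth at $[u]$) together with the two tangent lines being distinct (transverse intersection). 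Taking the union of all cases, $\Spec A\backslash 0$ is smooth everywhere iff $E_+$ and $E_-$ are smooth and meet transversely, as claimed.

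I do not expect any real obstacle: the argument is a bookkeeping exercise with the Jacobian criterion. The only point requiring care is the last case, where one must correctly translate linear independence of affine gradients in $\mathbb{A}^3$ into transverse intersection in $\mathbb{P}W$ via Euler's relation, and also observe that the smoothness of each $E_\pm$ at intersection points is subsumed by the transversality condition, so the three bullet points in the conclusion indeed combine into the stated equivalence.
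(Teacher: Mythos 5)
Your proof is correct and follows essentially the same approach as the paper's: apply the Jacobian criterion to the defining relations $y_\pm^2 - f_\pm$ and analyze the resulting $2\times 5$ matrix by cases on $y_\pm$. Two small remarks. First, the paper's printed version of this proof appears to have a typographical slip: it writes the Jacobian with $\partial f_\pm/\partial x_i^\pm$ and presents it as a $2\times 8$ matrix, but since $f_\pm = \det(q^\pm)$ is a cubic polynomial in $u_1, u_2, u_3$ alone, the correct variables are $u_1, u_2, u_3, y_+, y_-$ and the matrix is $2\times 5$, exactly as you wrote it. Second, your proof is more careful than the paper's on a point the paper quietly elides: the explicit passage via Euler's relation from linear independence of $\nabla_u f_+$ and $\nabla_u f_-$ in the affine cone to smoothness and transversality of the projective curves $E_\pm$ in $\mathbb{P}W$. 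This is exactly the right way to make the equivalence precise, and it also cleanly explains why the single condition ``rank $2$ everywhere'' simultaneously packages the smoothness of each $E_\pm$ away from the intersection with the transversality condition at $E_+\cap E_-$. No gaps.
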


\begin{proof}
The curves $E_+$, $E_-$ are smooth if and only if their Jacobians $[\partial f_+/\partial x_i^{\pm}]$ and $[\partial f_-/\partial x_i^{\pm}]$ are non-vanishing on themselves, and $E_+$ intersects $E_-$ transversely means that the $2 \times 6$ matrix
\[
\begin{bmatrix}
\frac{\partial f_+}{\partial x_1^+} & \frac{\partial f_+}{\partial x_2^+} & \dots & \frac{\partial f_+}{\partial x_3^-} \\
\frac{\partial f_-}{\partial x_1^+} & \frac{\partial f_-}{\partial x_2^+} & \dots & \frac{\partial f_-}{\partial x_3^-}
\end{bmatrix}
\]
%\frac{\partial f_+}{\partial x_1^+} & \frac{\partial f_+}{\partial x_2^+} & \frac{\partial f_+}{\partial x_3^+} & \frac{\partial f_+}{\partial x_1^-} & \frac{\partial f_+}{\partial x_2^-} & \frac{\partial f_+}{\partial x_3^-} \\
%\frac{\partial f_-}{\partial x_1^+} & \frac{\partial f_-}{\partial x_2^+} & \frac{\partial f_-}{\partial x_3^+} & \frac{\partial f_-}{\partial x_1^-} & \frac{\partial f_-}{\partial x_2^-} & \frac{\partial f_-}{\partial x_3^-}
has rank $2$ at the points of $E_+ \cap E_-$. The Jacobian for $\langle y_+^2-f_+,\, y_-^2-f_- \rangle$ is the $2 \times 8$ matrix
\[
\left[
\begin{array}{cccc|cc}
-\frac{\partial f_+}{\partial x_1^+} & -\frac{\partial f_+}{\partial x_2^+} & \dots & -\frac{\partial f_+}{\partial x_3^-} & 2y_+ & 0 \\
-\frac{\partial f_-}{\partial x_1^+} & -\frac{\partial f_-}{\partial x_2^+} & \dots & -\frac{\partial f_-}{\partial x_3^-} & 0 & 2y_-
\end{array}
\right]
\]
so it has rank $2$ at the points of $\mathbb{P}W$ outside of $E_+ \cup E_-$ because of the second block matrix. Looking at the first block matrix, we see that it has rank $2$ at the points of $E_+ \cup E_-$ precisely when $E_+$, $E_-$ are smooth and intersect transversely.
\end{proof}

\begin{prop}
\label{stabilizer_resolution}
The $\mathbb{C}_\lambda^*$-action on $\Spec A\backslash 0$ has stabilizer groups at a point $(u,y_\pm)$:
\[
\begin{cases}
    \text{trivial} \quad &\text{if} \quad y_+ \neq 0 \text{ or } y_- \neq 0, \\
    \mathbb{Z}_2 \quad &\text{if} \quad y_+,\, y_- = 0.
\end{cases}
\]
\end{prop}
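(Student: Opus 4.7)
The plan is to compute the stabilizer directly from the definition of the $\mathbb{C}_\lambda^*$-action. A point $(u_1,u_2,u_3,y_+,y_-) \in \Spec A \setminus 0$ is fixed by $\lambda \in \mathbb{C}^*$ if and only if
\[
\lambda^2 u_i = u_i \ (i=1,2,3), \qquad \lambda^3 y_+ = y_+, \qquad \lambda^3 y_- = y_-.
\]
I will analyze these conditions in two steps.

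First, I would exploit the assumption that the point lies in $\Spec A \setminus 0$, which by our convention means that at least one of $u_1, u_2, u_3$ is nonzero. The first set of equations then forces $\lambda^2 = 1$, so a priori the stabilizer is contained in $\{\pm 1\}$. The identity always stabilizes, so it remains only to decide when $\lambda = -1$ stabilizes.

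Next, substituting $\lambda = -1$ into the remaining two equations gives $-y_+ = y_+$ and $-y_- = y_-$, equivalently $y_+ = y_- = 0$. Therefore $-1$ stabilizes the point precisely when both $y_\pm$ vanish, producing the $\mathbb{Z}_2$ stabilizer in that case, and the trivial stabilizer otherwise. This splits exactly along the dichotomy stated in the proposition.

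Since the argument is a direct calculation with the explicit group action, there is no real obstacle; the only subtlety is to remember that the removed locus $\{u=0\}$ is what lets us deduce $\lambda^2 = 1$ at the first step, and that $-1$ acts as $+1$ on the $u_i$ (because the weight is $2$) but as $-1$ on $y_\pm$ (because the weight is $3$), which is precisely what accounts for the nontrivial stabilizer along $\{y_+ = y_- = 0\}$.
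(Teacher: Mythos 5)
Your proof is correct and is essentially the same as the one in the paper: read off $\lambda^2=1$ from the $u_i$ coordinates (using that $u\neq 0$ on $\Spec A\setminus 0$), then observe that $\lambda=-1$ fixes the point iff $y_+=y_-=0$. The extra remark about weights $2$ versus $3$ is a nice way to say why $-1$ acts trivially on $u$ but nontrivially on $y_\pm$, but it is the same computation.
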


\begin{proof}
To compute the stabilizer, we require that
\[
(\lambda^2 u_1, \lambda^2 u_2, \lambda^2 u_3, \lambda^3 y_+, \lambda^3 y_-) = (u_1, u_2, u_3, y_+, y_-),
\]
so $\lambda$ must be $\pm 1$ by looking at the $u_i$'s. Then $\lambda = -1$ fixes $(u, y_\pm)$ iff $y_+ = y_- = 0$.
%, and if one of $y_+$ or $y_- \neq 0$ then $\lambda = -1$ wouldn't fix the corresponding entry.
\end{proof}

\begin{definition}
\label{def_resolution}
We denote by $\mathscr{Y}$  the quotient stack $[\, (\Spec A \backslash 0) \,/\, \mathbb{C}_\lambda^*\, ]$.
\end{definition}

%\begin{prop}
%$[\, (\Spec A \backslash 0) \,/\, \mathbb{Z}_2\, ] \to \Spec B \backslash 0$ is an isomorphism of schemes outside of $E_+ \cup E_-$.
%\end{prop}

\begin{prop}
\label{prop_resolution} The Deligne-Mumford stack
$\mathscr{Y}$ is a stacky resolution of $Y$, and $Y$ is its coarse moduli space. Furthermore, $\mathscr{Y}$ has $\mathbb{Z}_2$-stack structure at the $9$ intersection points of $E_+$ and $E_-$, and ordinary scheme points elsewhere.
\end{prop}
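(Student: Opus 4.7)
The plan is to exploit the short exact sequence $1 \to \mathbb{Z}_2 \to \mathbb{C}_\lambda^* \to \mathbb{C}_t^* \to 1$ coming from $\lambda \mapsto \lambda^2$ to factor the natural map $\mathscr{Y} \to Y$ as a $\mathbb{Z}_2$-quotient followed by the $\mathbb{C}_t^*$-quotient. The kernel $\mathbb{Z}_2 = \{\pm 1\}$ acts on $\Spec A$ by fixing the $u_i$ and negating both $y_+$ and $y_-$, and a quick invariant-theory calculation using $(y_+y_-)^2 = f_+ f_-$ identifies $A^{\mathbb{Z}_2} = B$ via $y \mapsto y_+ y_-$. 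Since $\mathbb{C}_t^*$ acts freely on $\Spec B \setminus 0$ (the $u_i$'s have $t$-weight $1$), $\Proj B = Y$ is an honest scheme, and composing with the $\mathbb{Z}_2$-quotient map presents $Y$ as the coarse moduli space of $\mathscr{Y}$.

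To pin down the stacky locus, I would combine Proposition \ref{stabilizer_resolution} with the defining relations of $A$: the nontrivial $\mathbb{Z}_2$-stabilizer occurs only where $y_+ = y_- = 0$, which forces $f_+ = f_- = 0$. Projecting to $\mathbb{P}W$, this is exactly $E_+ \cap E_-$. Under the genericity assumption (invoked via Proposition \ref{spec_a_smooth}), $E_\pm$ are smooth cubics in $\mathbb{P}W \cong \mathbb{P}^2$ meeting transversely, so B\'ezout yields exactly $9$ intersection points. At each such point, using local equations $x$ for $E_+$ and $w$ for $E_-$ on $\mathbb{P}W$, the relations $y_+^2 = f_+$ and $y_-^2 = f_-$ allow us to eliminate the $\mathbb{P}W$-coordinates, exhibiting $\Spec A$ locally as the smooth plane $\Spec \mathbb{C}[y_+, y_-]$ with $\mathbb{Z}_2$ acting by sign-negation. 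Correspondingly, $Y = \{y^2 = f_+ f_-\}$ acquires an $A_1$-singularity $\{y^2 = xw\}$ at each of these $9$ points, and $\mathscr{Y} \to Y$ becomes the canonical $[\mathbb{A}^2/\mathbb{Z}_2]$ stacky resolution.

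Smoothness of $\mathscr{Y}$ itself follows from Proposition \ref{spec_a_smooth} together with the finiteness of all $\mathbb{C}_\lambda^*$-stabilizers, so $\mathscr{Y}$ is a smooth DM stack. Away from the $9$ points, the $\mathbb{Z}_2 \subset \mathbb{C}_\lambda^*$ acts freely on $\Spec A \setminus 0$, making $\mathscr{Y} \to Y$ an isomorphism there, while at the $9$ points the local picture above identifies the map with the standard stacky $A_1$-resolution. I expect the main obstacle to be this local analytic identification at the nodes --- specifically, confirming that $y_+$ and $y_-$ serve as local coordinates on $\Spec A$ near such a point --- since everything else reduces to the bookkeeping of the $\mathbb{C}^*$-weights through the exact sequence above.
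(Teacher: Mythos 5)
Your proposal is correct in substance but takes a genuinely different route from the paper's. You factor the comparison map through the short exact sequence $1 \to \mathbb{Z}_2 \to \mathbb{C}_\lambda^* \to \mathbb{C}_t^* \to 1$, compute invariants $A^{\mathbb{Z}_2} = B$ explicitly, and then analyze the stacky locus via a local model at the nine nodes. The paper instead proves birationality by a point-counting argument (a $\mathbb{C}$-point of $\mathscr{Y}$ is an equivariant map $\mathbb{C}_\lambda^* \to \Spec A \setminus 0$, and away from $E_+ \cup E_-$ these biject with points of $Y$, so the map is an isomorphism between smooth schemes there) and cites Gulbrandsen directly for the coarse moduli statement of a $\mathbb{C}^*$-quotient stack. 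Your approach buys concreteness: the computation $A^{\mathbb{Z}_2} = B$ and the local $[\mathbb{A}^2/\mathbb{Z}_2]$ picture make the $A_1$-resolution structure transparent, whereas the paper's argument is more formal and leans on an external reference.

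Two points to tighten. First, when you say the relations let you ``exhibit $\Spec A$ locally as the smooth plane $\Spec \mathbb{C}[y_+, y_-]$'' near a node: $\Spec A$ is $3$-dimensional (one extra direction tangent to the cone generator), so the correct statement is that a \emph{slice} transverse to the $\mathbb{C}_\lambda^*$-orbit is $\Spec\mathbb{C}[y_+, y_-]$ with $\mathbb{Z}_2$ acting by sign, so that \'etale-locally $\mathscr{Y} \simeq [\mathbb{A}^2/\mathbb{Z}_2]$. Second, the step ``composing with the $\mathbb{Z}_2$-quotient map presents $Y$ as the coarse moduli space of $\mathscr{Y}$'' glosses over why the coarse moduli functor is compatible with this two-step factorization; the cleanest route is to note that $A^{\mathbb{Z}_2}$ equals the degree-even part $A^{(2)}$, so $\Proj A \cong \Proj B$, and then invoke (as the paper does) that $\Proj A$ is the coarse moduli space of $[(\Spec A \setminus 0)/\mathbb{C}^*]$. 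With those two adjustments your argument is complete and arguably more illuminating than the original.
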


\begin{proof}
By Proposition \ref{spec_a_smooth},
$\Spec A \backslash 0$ is a smooth scheme, so by definition $\mathscr{Y} = [ (\Spec A \backslash 0) / \mathbb{C}_\lambda^*]$ is a smooth stack, and the second statement is just a restatement of the last proposition. Next, giving a point in $\mathscr{Y}$ over $\mathbb{C}$ is equivalent to giving a map $\mathbb{C}_\lambda^* \to \Spec A\backslash 0$ that is $\mathbb{C}_\lambda^*$-equivariant, which is determined by the image of $1 \in \mathbb{C}_\lambda^*$. If the image is not one of the intersection points, then it is of the form $(u_1, u_2, u_3, y_+, y_-)$ where $y_\pm^2 = f_\pm(u)$. So we have a bijection of $\mathbb{C}$-points of $\mathscr{Y}$ and $Y$ outside of $E_+ \cup E_-$, and a map of smooth schemes bijective on points is an isomorphism, showing that $\mathscr{Y} \to Y$ is a birational map. Finally, $Y$ being the coarse moduli space of $\mathscr{Y}$ follows from the fact that, for a graded $\mathbb{C}$-algebra $R$, the GIT quotient $(\Spec R \backslash 0) // \mathbb{C}^*$ is the coarse moduli space of $[(\Spec R \backslash 0) \,/\, \mathbb{C}^*]$, see for example \cite{gulbrandsen}.
\end{proof}

A common approach to non-commutative algebraic geometry focuses on the abelian categories of coherent sheaves and the corresponding derived categories.

\begin{definition}
\label{def_derived_cat}
 Let $Z$ be an algebraic variety and $\mathcal{B}$ be a locally free sheaf of unital associative algebras on $Z$ of finite rank as $\mathscr{O}_Z$-module. Then the category of coherent sheaves $\Coh-Z, \mathcal{B}$ on the noncommutative algebraic variety $(Z, \mathcal{B})$ is defined to be the category of coherent sheaves of right $\mathcal{B}$-modules on $Z$, and $\mathcal{D}^b(\Coh-Z, \mathcal{B})$ is defined to be the bounded derived category of $\Coh-Z, \mathcal{B}$.
\end{definition}

Of particular interest is the case when $\mathcal{B}$ is a sheaf of Azumaya algebras on $Y$, which encodes a Brauer class. In this case, 
there is another definition of an abelian category based on twisted sheaves, see \cite[Definition 1.2.1]{caldararu2}. Both definitions naturally extend to the case where $Y$ is a Deligne-Mumford stack by pulling back to a scheme cover.

\medskip
We can now state our first main result which is the analog of \eqref{dual-Cliff} for the stack $\mathscr{Y}$ and the K3 surface $X$.
\begin{thm}
\label{main_thm_1}
There exists a sheaf of Azumaya algebras $\mathcal A$ on  $\mathscr{Y}$ so that there is a derived equivalence
\[
\mathcal{D}^b(\Coh-\mathscr{Y},\,\mathcal A) \quad \simeq \quad \mathcal{D}^b(\Coh-X).
\]
\end{thm}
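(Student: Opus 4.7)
The plan is to apply Kuznetsov's homological projective duality for complete intersections of three quadrics to identify $\mathcal{D}^b(\Coh-X)$ with the Clifford category $\mathcal{D}^b(\Coh-\mathbb{P}W, \mathcal{B}_0)$, and then to Morita-transfer the latter to $\mathscr{Y}$ using the additional data $y_\pm$ available on the stacky resolution. I would first construct in the standard way the $\mathbb{Z}_2$-graded sheaf of Clifford algebras $\mathcal{B} = \mathcal{B}_0 \oplus \mathcal{B}_1$ on $\mathbb{P}W$ associated with the tautological $\mathcal{O}_{\mathbb{P}W}(1)$-valued quadratic form on $V \otimes \mathcal{O}_{\mathbb{P}W}(-1)$. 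Since $X$ is a $(2,2,2)$-complete intersection in $\mathbb{P}^5$, hence a K3 surface with $\dim V = 2\dim W$ so that no residual line-bundle component remains in the Kuznetsov decomposition, the main theorem of \cite{kuznetsov} yields
\[
\mathcal{D}^b(\Coh-X) \;\simeq\; \mathcal{D}^b(\Coh-\mathbb{P}W,\, \mathcal{B}_0).
\]

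Next I would analyse the structure of $\mathcal{B}_0$ using the orthogonal splitting $V = V^+ \oplus V^-$ induced by $\sigma$, in which every $q \in W$ decomposes as $q = q^+ + q^-$. A direct Clifford computation identifies the center as $Z(\mathcal{B}_0) = \mathcal{O}_{\mathbb{P}W}[z]/(z^2 + f_+ f_-)$ with $z = e_1^+ e_2^+ e_3^+ e_1^- e_2^- e_3^-$, so that $Z(\mathcal{B}_0) \cong \pi_* \mathcal{O}_Y$ and $\mathcal{B}_0$ may be viewed as a rank-$16$ sheaf of $\mathcal{O}_Y$-algebras, Azumaya of degree $4$ on the smooth locus of $Y$. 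The constructive heart of the proof is to promote this to a rank-$16$ Azumaya algebra $\mathcal{A}$ on $\mathscr{Y}$, using the square roots $y_\pm$ of $f_\pm$ together with their $\mathbb{Z}_2$-weights to refine the spinor double cover and to resolve the degeneracy of the Clifford structure at the nine nodes. Locally I would define $\mathcal{A}$ as an appropriate $\sigma$-twist of $\mathrm{Cliff}_0(q^+) \otimes \mathrm{Cliff}_0(q^-)$ in which the central element $z$ is identified with $\pm i y_+ y_-$ and the odd Clifford pieces $\mathrm{Cliff}_1(q^\pm)$ are rescaled by $y_\pm$. Since $\mathcal{B}_0$ carries a natural Clifford anti-involution, the Brauer class $\alpha := [\mathcal{A}]$ is annihilated by $2$, and nontriviality follows from the fact that $\mathcal{D}^b(\Coh-X)$ is not equivalent to the untwisted $\mathcal{D}^b(\Coh-\mathscr{Y})$. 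The construction also provides a $\mathcal{B}_0$-$\mathcal{A}$-bimodule giving the Morita-type equivalence $\mathcal{D}^b(\Coh-\mathbb{P}W, \mathcal{B}_0) \simeq \mathcal{D}^b(\Coh-\mathscr{Y}, \alpha)$, which combined with the Kuznetsov equivalence yields the theorem.

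The main obstacle is carrying out the construction of $\mathcal{A}$ at the nine stacky points. There both $q^+$ and $q^-$ drop to rank $2$, so the fiber of the $\mathcal{O}_Y$-algebra is non-semisimple (of dimension $16$ with a nilpotent center isomorphic to $\mathbb{C}[z]/(z^2)$), and a naive pullback along $\rho: \mathscr{Y} \to Y$ does not become Azumaya. The $\mathbb{Z}_2$-grading from $y_\pm \mapsto -y_\pm$ on $\mathscr{Y}$ supplies exactly the extra data needed to split off the offending nilpotent parts of the Clifford algebra, but verifying the Azumaya property and checking that the resulting Morita bimodule really induces an equivalence of derived categories will require an explicit local Clifford-algebra computation in the coordinates $(u, y_+, y_-)$ near a node.
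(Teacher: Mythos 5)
Your overall strategy --- invoke Kuznetsov's HPD for intersections of quadrics to get $\mathcal{D}^b(\Coh\text{-}X) \simeq \mathcal{D}^b(\Coh\text{-}\mathbb{P}W,\,\mathcal{B}_0)$, identify $Z(\mathcal{B}_0)$ with $\pi_*\mathcal{O}_Y$, and then transfer to the stacky resolution $\mathscr{Y}$ --- lines up with the paper's outline. But the transfer step, which you correctly flag as the ``constructive heart,'' is where the proposal stalls, and the device you reach for is not the one the paper uses.

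The paper's key algebraic move is to replace the super tensor product $Cl = Cl_+ \widehat{\otimes} Cl_-$ by the \emph{ordinary} tensor product $\widetilde{Cl} = Cl_+ \otimes_{\mathbb{C}[u]} Cl_-$, in which $v_i^+$ and $v_j^-$ commute rather than anticommute. The reason is precisely the one you implicitly run into: in $Cl$ the central elements $d_+, d_-$ anticommute (so $d_+d_-$ squares to $-f_+f_-$, hence your $\pm i$), while in $A$ the variables $y_\pm$ commute, so $A$ simply does not embed into $Cl$. In $\widetilde{Cl}$, the elements $d_+\otimes 1$ and $1\otimes d_-$ do commute, $Z(\widetilde{Cl}) = A$ (Proposition~\ref{prop_tilde_clifford_center}), and $\widetilde{Cl}$ is an honest rank-$16$ Azumaya algebra over $\Spec A\setminus 0$ with no trouble at the preimages of the nodes (Proposition~\ref{prop_azumaya_over_a}). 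Your description ``a $\sigma$-twist of $\mathrm{Cliff}_0(q^+)\otimes\mathrm{Cliff}_0(q^-)$ with odd pieces rescaled by $y_\pm$'' gestures toward this, but as written it is neither a well-defined algebra nor the right size: $\mathrm{Cliff}_0(q^+)\otimes\mathrm{Cliff}_0(q^-)$ has rank $16$ over $\mathbb{C}[u]$, i.e.\ rank $4$ over $A$, which is a quarter of what a degree-$4$ Azumaya algebra on $\mathscr{Y}$ must be.

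The second gap is the mechanism of the equivalence. You propose a Morita bimodule between $\mathcal{B}_0$ on $\mathbb{P}W$ and $\mathcal{A}$ on $\mathscr{Y}$. The paper instead proceeds through two steps that are not Morita-theoretic in your sense: first an explicit $\mathbb{C}[u]$-algebra isomorphism $Cl_{ev} \cong \widetilde{Cl}_{ev}$ (Proposition~\ref{isomorphism_veronese}), and then Verevkin's Veronese lemma (Lemma~\ref{lem_veronese}) applied to the $\mathbb{N}$-grading on $\widetilde{Cl}$ coming from the $\mathbb{C}_\lambda^*$-action, which yields $\Cohproj(\widetilde{Cl}_{ev}) \simeq \Cohproj(\widetilde{Cl})$. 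This is the crucial tool that lets one pass from the $\mathbb{C}_t^*$-graded world of $\mathbb{P}W$ (where $\deg u_i = 1$, $\deg y = 3$) to the $\mathbb{C}_\lambda^*$-graded world of $\mathscr{Y}$ (where $\deg u_i = 2$, $\deg y_\pm = 3$), a grading change which --- as Remark~\ref{rmk_grading} notes --- cannot be realized by giving $A$ a $\mathbb{C}[u]$-compatible grading. Without this Veronese/grading argument, it is not clear how to make your bimodule equivalence precise, since the two sides live over different gradings and a naive bimodule would not intertwine them.

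Finally, your argument for the order of $\alpha$ has a circularity: you infer nontriviality from the (unproven) non-equivalence $\mathcal{D}^b(\Coh\text{-}X) \not\simeq \mathcal{D}^b(\Coh\text{-}\mathscr{Y})$, which is exactly what you would need to establish, not assume. The order-$2$ bound via the Clifford anti-involution is fine, but nontriviality would require a separate argument. (The paper's written proof also does not dwell on nontriviality, so this is a shared loose end rather than a distinguishing flaw.) In short: the structural plan is right, but you are missing the two specific devices --- the commuting tensor product $\widetilde{Cl}$ and the graded Veronese equivalence --- that make the transfer to $\mathscr{Y}$ actually work.
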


The sheaf  $\mathcal A$ will be constructed in terms of Clifford algebras in Section $3$, and the proof of Theorem \ref{main_thm_1} will be provided there after all relevant definitions are given.

\medskip
Our next goal is to incorporate the Enriques involution on the side of $X$, and its counterpart of the side of $\mathscr{Y}$. We will take inspiration from \cite{borisov}, which suggests the presence of a $(2,2)$-root stack. We can construct another quotient stack with $\mathbb{Z}_2$-stack structure on each of the curves $E_\pm$ except the intersections and $(\mathbb{Z}_2 \times \mathbb{Z}_2)$-stack structure at their intersections. We observe that $-1 \in \mathbb{C}_\lambda^*$ already acts as $(-1, -1)$ on the pair $(y_+,\, y_-)$, so we define the group $G$ as follows:

\begin{definition}
\label{def_root_stack}
Let $G := \mathbb{Z}_2 \times \mathbb{C}_\lambda^*$ and its action on $A$ to be
\[
(\pm 1, \lambda) \cdot (u_0, u_1, u_2, y_+, y_-) = (\lambda^2 u_0, \lambda^2 u_1, \lambda^2 u_2, \lambda^3 y_+, \pm  \lambda^3 y_-).
\]
Let $\widehat{\mathscr Y}$ denote the quotient stack $[\, (\Spec A \backslash 0) \,/\, G\, ]$.
\end{definition}
We observe that the extra $\mathbb{Z}_2$-action together with $-1 \in \mathbb{C}_\lambda^*$ generate a $\mathbb{Z}_2 \times \mathbb{Z}_2$. Note that the selection of the $\mathbb{Z}_2$-action specifies a special role for $y_-$. 

\begin{prop}
\label{stabilizer_root_stack}
The $G$-action on $\Spec A \backslash 0$ has stabilizer group at a point $(u,y_\pm)$:
\[
\begin{cases}
    \text{trivial} \quad &\text{if} \quad y_+,\, y_- \neq 0, \\
    \mathbb{Z}_2 \quad &\text{if} \quad \text{exactly one of } y_+,\, y_- = 0,\\
    \mathbb{Z}_2 \times \mathbb{Z}_2 \quad &\text{if} \quad y_+,\, y_- = 0.
\end{cases}
\]
\end{prop}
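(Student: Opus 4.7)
The plan is to compute the stabilizer of a general point $(u, y_+, y_-) \in \Spec A \backslash 0$ directly from the defining action of $G = \mathbb{Z}_2 \times \mathbb{C}_\lambda^*$, paralleling the proof of Proposition \ref{stabilizer_resolution}. An element $(\epsilon, \lambda)$ fixes $(u, y_+, y_-)$ exactly when $\lambda^2 u_i = u_i$ for all $i$, $\lambda^3 y_+ = y_+$, and $\epsilon \lambda^3 y_- = y_-$. Since we have removed the locus where all $u_i$ vanish, at least one $u_i$ is nonzero, which forces $\lambda^2 = 1$, hence $\lambda \in \{\pm 1\}$. This reduces the problem to checking only the four group elements of the subgroup $\mathbb{Z}_2 \times \{\pm 1\} \subseteq G$.

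Next I would enumerate these four cases. The identity $(+1, +1)$ of course fixes every point. The element $(-1, +1)$ acts by $y_- \mapsto -y_-$ while fixing $u$ and $y_+$, so it fixes the point iff $y_- = 0$. The element $(+1, -1)$ acts by $(y_+, y_-) \mapsto (-y_+, -y_-)$, so it fixes the point iff $y_+ = y_- = 0$. Finally $(-1, -1)$ acts by $(y_+, y_-) \mapsto (-y_+, y_-)$, so it fixes the point iff $y_+ = 0$.

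Collecting these cases gives the three outcomes stated in the proposition: when both $y_\pm \neq 0$ only the identity fixes $(u,y_\pm)$; when exactly one of $y_\pm$ vanishes, the stabilizer is generated by $(-1, +1)$ or $(-1, -1)$ respectively, yielding a $\mathbb{Z}_2$; and when $y_+ = y_- = 0$ all four of the elements above fix the point, giving the full $\mathbb{Z}_2 \times \mathbb{Z}_2$. I do not expect any real obstacle: the entire argument is an elementary case analysis once the condition $\lambda^2 = 1$ is extracted from the $u_i$-coordinates.
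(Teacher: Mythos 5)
Your proposal is correct and follows the same approach as the paper: force $\lambda=\pm 1$ from the $u_i$-coordinates, then enumerate the four elements of $\mathbb{Z}_2\times\{\pm1\}$. In fact your enumeration is slightly more careful than the paper's — for $y_+=0,\ y_-\neq 0$ the nontrivial stabilizing element is $(-1,-1)$ (the diagonal $\mathbb{Z}_2$), as you correctly compute, whereas the paper's displayed list for that case contains a misprint; this does not affect the statement, which only concerns isomorphism types of stabilizers.
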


\begin{proof}
Assume $(u_0, u_1, u_2, y_+, y_-) \neq 0$ is a fixed point of $(s, \lambda) \in G$. Again this forces $\lambda = \pm 1$. We can then list all the cases:
\[
\begin{cases}
s = 1,\, \lambda = 1 & \text{if } y_+,\, y_- \neq 0, \\
(s,\lambda) = (-1,-1) {\rm ~or~}(1,1)& \text{if } y_+ = 0,\, y_- \neq 0, \\
s = \pm 1,\, \lambda = 1 & \text{if } y_+ \neq 0,\, y_- = 0, \\
s = \pm 1,\, \lambda = \pm 1 & \text{if } y_+,\, y_- = 0.
\end{cases}
\]
\end{proof}

As we have the map $\Spec A \backslash 0 \to W \backslash 0$ and the group actions are compatible, we obtain a map $\widehat{\mathscr Y} \to [ (W \backslash 0) \,/\, \mathbb{C}_t^* ] \cong \mathbb{P}W$ of stacks.

\begin{prop}
\label{prop_root_stack} The stack
$\widehat{\mathscr Y} $ is isomorphic to the $(2,2)$-root stack over $\mathbb{P}W$ branched over $(E_+, E_-)$ in the sense of Cadman \cite{cadman}. In particular, $\widehat{\mathscr Y} \to \mathbb{P}W$ is a birational map.
\end{prop}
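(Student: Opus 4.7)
The plan is to exhibit $\mathscr{Y}/\mathbb{Z}_2$ as a fiber product of two square-root stacks over $\mathbb{P}W$ and match it with Cadman's $(2,2)$-construction via an explicit atlas computation.

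First, I would identify the candidate universal data on $\mathscr{Y}/\mathbb{Z}_2 = [(\Spec A \setminus 0)/G]$. Characters of $G = \mathbb{Z}_2 \times \mathbb{C}_\lambda^*$ form the group $\mathbb{Z}_2 \times \mathbb{Z}$, and the line bundles $\mathcal{M}_+, \mathcal{M}_-$ corresponding to the characters $(1,3)$ and $(-1,3)$ carry $y_+, y_-$ as natural global sections. The projection $\pi\colon \mathscr{Y}/\mathbb{Z}_2 \to \mathbb{P}W$ is induced by $G \to \mathbb{C}_t^*$, $(s,\lambda) \mapsto \lambda^2$, under which $\mathcal{O}(1)$ pulls back to the character $(1,2)$. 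Hence $\mathcal{M}_+^{\otimes 2}$ and $\mathcal{M}_-^{\otimes 2}$ both have character $(1,6)$, matching $\pi^*\mathcal{O}(3)$, and the defining relations $y_\pm^2 = f_\pm$ in $A$ say exactly that $y_\pm^2 = \pi^* f_\pm$ as sections. Cadman's universal property then produces a canonical morphism
\[
\Phi\colon \mathscr{Y}/\mathbb{Z}_2 \longrightarrow \mathscr{Z} := \sqrt[2]{(\mathcal{O}(3), f_+)/\mathbb{P}W} \times_{\mathbb{P}W} \sqrt[2]{(\mathcal{O}(3), f_-)/\mathbb{P}W}.
\]

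Next, I would show $\Phi$ is an isomorphism by presenting $\mathscr{Z}$ as a quotient stack and comparing directly. Each single root stack admits the presentation $\sqrt[2]{(\mathcal{O}(3), f_\pm)/\mathbb{P}W} = [(\Spec A_\pm \setminus 0) / \mathbb{C}^*]$ with $A_\pm = \mathbb{C}[u_i, y_\pm]/(y_\pm^2 - f_\pm)$ and $\mathbb{C}^*$-weights $(2,2,2,3)$; indeed, the squaring $\mu \mapsto \mu^2$ onto $\mathbb{C}_t^*$ recovers $\mathbb{P}W$ as the coarse quotient while introducing a $\mu_2$-stabilizer exactly along $\{y_\pm = 0\}$. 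Taking the fiber product of these quotient stacks, the scheme-level fiber product $(\Spec A_+ \setminus 0) \times_{W \setminus 0} (\Spec A_- \setminus 0)$ is exactly $\Spec A \setminus 0$, and the group-level fiber product $\{(\mu, \nu) \in \mathbb{C}_\mu^* \times \mathbb{C}_\nu^* : \mu^2 = \nu^2\}$ is isomorphic to $G$ via $(\mu, \nu) \mapsto (\nu/\mu, \mu)$, under which the induced action recovers the $G$-action of Definition \ref{def_root_stack}. This identifies $\mathscr{Z}$ with $\mathscr{Y}/\mathbb{Z}_2$ as quotient stacks, and a check on the universal line bundles confirms that $\Phi$ is this identification. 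The birationality statement is then immediate since $\Phi$ is an isomorphism of schemes on the complement of $E_+ \cup E_-$ by Proposition \ref{stabilizer_root_stack}.

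The main obstacle is the group-theoretic bookkeeping: the $\mathbb{Z}_2$ factor in $G$ acts asymmetrically, only on $y_-$, so one must verify carefully that the manifestly symmetric fiber-product group $\{(\mu,\nu) : \mu^2 = \nu^2\}$ really does recover $G$ in a way that matches the two factors of the root stack with the characters $(1,3)$ and $(-1,3)$. Once that identification is set up, the rest is a routine application of the fiber-product formula for quotient stacks.
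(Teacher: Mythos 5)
Your proposal is correct and follows essentially the same strategy as the paper: identifying Cadman's $(2,2)$-root stack with the quotient stack $[(\Spec A \setminus 0)/G]$ by computing the fiber-product presentation explicitly and matching group actions. The only differences are organizational — the paper computes the fiber product in one shot over $[\mathbb{C}^2/(\mathbb{C}^*)^2]$, while you factor it through the two single root stacks over $\mathbb{P}W$ and also build the comparison map $\Phi$ via the universal property before verifying it is an isomorphism; the latter step is slightly redundant, since exhibiting both stacks with the same quotient presentation already settles the isomorphism.
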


\begin{proof}
We will use Cadman's notation in \cite{cadman}, in which the root stack is denoted by $\mathbb{P}W_{(E_+,E_-),\, (2,2)}$. By definition, the root stack $\mathscr{R} := \mathbb{P}W_{(E_+,E_-),\, (2,2)}$ is the fiber product
\[
\xymatrixcolsep{1.5pc}\xymatrixrowsep{2pc}\xymatrix
{
\ & \mathscr{R} \ar[dl] \ar[dr] & \ \\
\mathbb{P}W = [ (W \backslash 0) / \mathbb{C}_t^* ] \ar[dr] & \ & [ \mathbb{C}^2 / (\mathbb{C}_\lambda^*)^2 ] \ar[dl] & \ \\
\ & [ \mathbb{C}^2 / (\mathbb{C}^*)^2 ] & \ & \
}
\]
so it can be identified with $[ \,(W \backslash 0) \times_{\mathbb{C}^2} \mathbb{C}^2\; /\; \mathbb{C}_t^* \times_{(\mathbb{C}^*)^2} (\mathbb{C}_\lambda^*)^2 \, ]$, where the fiber product of the groups is
\[
\mathbb{C}_t^* \times_{(\mathbb{C}^*)^2} (\mathbb{C}_\lambda^*)^2 = \{ (t, \lambda_1, \lambda_2) : t^3 = \lambda_1^2 = \lambda_2^2 \} \subseteq \mathbb{C}_t^* \times \mathbb{C}_\lambda^* \times \mathbb{C}_\lambda^*
\]
via the maps
\[
\xymatrixcolsep{1.5pc}\xymatrixrowsep{2pc}\xymatrix
{
t \in \mathbb{C}_t^* \ar@{|->}[dr] & \ & (\lambda_1, \lambda_2) \in (\mathbb{C}_\lambda^*)^2 \ar@{|->}[dl] \\
\ & (t^3, t^3),\, (\lambda_1^2, \lambda_2^2) \in (\mathbb{C}^*)^2 & \
}
\]
and can be identified with
\[
\left\{ \left( \frac{\lambda_1}{\lambda_2}, \frac{\lambda_1}{t} \right) \right\} = \mathbb{Z}_2 \times \mathbb{C}^* \simeq G.
\]
The maps of the underlying schemes are
\[
\xymatrixcolsep{2pc}\xymatrixrowsep{2pc}\xymatrix
{
 u \in W \backslash 0 \ar@{|->}[dr] & \ & (a_1, a_2) \in \mathbb{C}^2 \ar@{|->}[dl] \\
 \ & (f_+(u), f_-(u)),\, (a_1^2, a_2^2) \in \mathbb{C}^2& \
}
\]
so it is clear that their fiber product is $(W \backslash 0) \times_{\mathbb{C}^2} \mathbb{C}^2 = \Spec A\backslash 0$.
%According to Def. 2.2.4 and Remark 2.2.5 in \cite{cadman}, given two divisors $E_+$ and $E_-$, the $(2,2)$-root stack over $(E_+, E_-)$ is the fiber product of the $2$-nd root stacks over $E_+$ and $E_-$ separately. In our setup, these are quotient stacks $[(\Spec A_+\backslash 0) / \mathbb{C}_\lambda^*]$ and $[(\Spec A_-\backslash 0) / \mathbb{C}_\lambda^*]$ where
%\begin{align*}
%    A_+ &:= \mathbb{C}[u_1, u_2, u_3, y_+] / \langle y_+^2-f_+ \rangle, \\
%    A_- &:= \mathbb{C}[u_1, u_2, u_3, y_-] / \langle y_-^2-f_- \rangle,
%\end{align*}
%and $\mathbb{C}_\lambda^*$ acts the same way on the elements. Then we have $A = A_+ \otimes_{\mathb{C}[u]} A_-$ so $\Spec A$ corresponds to the fiber product of the two quotient stacks. ******
\end{proof}

Our second main result can now be stated.
\begin{thm}
\label{main_thm_2}
There exists a sheaf of Azumaya algebras $\widehat{\mathcal A}$ on  $\widehat{\mathscr Y}$ so that there is a derived equivalence
\[
\mathcal{D}^b(\Coh-\widehat{\mathscr Y},\,\widehat{\mathcal A}) \quad \simeq \quad \mathcal{D}^b(\Coh-X/\sigma).
\]
\end{thm}
The proof of Theorem \ref{main_thm_2} will be provided in Section 3.

\begin{remark}
\label{rmk_diagram_cube}
The relations of the schemes and stacks above (and some additional ones) can be summarized in the diagram below.
\[
\xymatrixcolsep{5pc}\xymatrixrowsep{3pc}\xymatrix
{
\ & \Spec A \backslash 0 \ar[d]_{2:1} \ar[ddr]^{[-/\mathbb{C}_\lambda^*]} & \ \\
\ & [\, (\Spec A \backslash 0) \,/\, \{1\} \times \mathbb{Z}_2\, ] \ar@{.>}[d] \ar[dl]^{2:1} \ar[dr]_{[-/\mathbb{C}_t^*]} & \ \\
[\, (\Spec A \backslash 0) \,/\, \mathbb{Z}_2 \times \mathbb{Z}_2\, ] \ar[d]_{\text{coarse moduli}} \ar[dr]_{[-/\mathbb{C}_t^*]} & \Spec B \backslash 0 \ar@{.>}[dl] \ar@{.>}[dr] & \mathscr{Y} = [\, (\Spec A \backslash 0) \,/\, \{1\} \times \mathbb{C}_\lambda^*\, ] \ar[d]^{\text{coarse moduli}} \ar[dl]^{2:1} \\
W\backslash 0 \ar[dr]_{[-/\mathbb{C}_t^*]} &\widehat{\mathscr Y}= [\, (\Spec A \backslash 0) \,/\, G\, ] \ar[d]_{\text{coarse moduli}} & Y = \Proj B \ar[dl]^{2:1} \\
\ & \mathbb{P}W & \ \\
}
\]
\end{remark}

\section{Clifford Algebras}

In this section, we will define the sheaves of Azumaya algebras $\mathcal A$ and $\widetilde{\mathcal A}$ appearing in Theorem \ref{main_thm_1} and \ref{main_thm_2} in terms of Clifford algebras and provide the proofs to the theorems. 

\medskip
We start by defining several Clifford algebras of interest. In Kuznetsov's paper  \cite{kuznetsov}, the sheaves of Azumaya algebras in question are the even parts of Clifford algebras. In the presence of the involution $\sigma$ in our setup, it is more natural to work with a variant of the full Clifford algebra, rather than its even part. The full Clifford algebra will be denoted by
\[
Cl := Cl(V) = \mathbb{C}[u]\{v_1^+, v_2^+, v_3^+, v_1^-, v_2^-, v_3^-\} \;/\; \langle v_i^\pm v_j^\pm + v_j^\pm v_i^\pm - 2 q_u(v_i^\pm, v_j^\pm),\, v_i^+ v_j^- + v_j^- v_i^+ \rangle.
\]
Its odd and even parts will be denoted by $Cl_{odd}$ and $Cl_{ev}$ respectively, as before. The two smaller Clifford algebras on $V_+$ and $V_-$ over $\mathbb{C}[u]$ will be denoted by
\[
Cl_+ := Cl(V_+) = \mathbb{C}[u]\{v_1^+, v_2^+, v_3^+\} \;/\; \langle v_i^+ v_j^+ + v_j^+ v_i^+ -2 q_u(v_i^+, v_j^+) \rangle,
\]
\[
Cl_- := Cl(V_-) = \mathbb{C}[u]\{v_1^-, v_2^-, v_3^-\} \;/\; \langle v_i^- v_j^- + v_j^- v_i^- - 2 q_u(v_i^-, v_j^-) \rangle.
\]
Note that $Cl$ is the \emph{super tensor product} of $Cl_+$ and $Cl_-$ over $\mathbb{C}[u]$.
%We will also have to consider the ordinary tensor product
For reasons listed in the remark below, we will mainly work with the \emph{ordinary tensor product}
\begin{align*}
\widetilde{Cl} &:= Cl_+ \otimes_{\mathbb{C}[u]} Cl_- = \mathbb{C}[u]\{v_1^+, v_2^+, v_3^+, v_1^-, v_2^-, v_3^-\} \;/\; \langle v_i^\pm v_j^\pm + v_j^\pm v_i^\pm - 2 q_u(v_i^\pm, v_j^\pm),\, v_i^+ v_j^- - v_j^- v_i^+ \rangle.
\end{align*}
In terms of relations, the only difference between $Cl$ and $\widetilde{Cl}$ is the anti-commutativity or commutativity of the variables $v_i^+$ and $v_j^-$, but it is an important difference.
%The main theorem regarding Enriques surfaces will involve the semi-direct products such as $Cl_{ev}\{h\}$ and $\widetilde{Cl}\{h\}$ of the algebras above with
%\[
%\mathbb{C}[u][h]/\langle h^2 - 1 \rangle,
%\]
%where $h$ interacts with $v_i^\pm$ via the relation $hv_i^\pm = \pm v_i^\pm h$. We state it here and prove the relevant facts about it for conciseness.
%The reason to work with $\widetilde{Cl}$ will become clearer later. (propositions?) ******

\begin{remark}
\label{rmk_tensor_product}
%Other options we considered were the whole Clifford algebra $Cl$ itself or the graded tensor product $Cl_+ \,\widehat{\otimes}\, Cl_-$, but we finally settled on $Cl_+ \otimes Cl_-$ because of several reasons:
Ordinary tensor products of Clifford algebras were named quasi Clifford algebras or extended Clifford algebras and studied in \cite{gastineau, marchuk}. There are several reasons why we are working with the ordinary tensor product $\widetilde{Cl}$ and they will be elaborated on in the upcoming propositions.
\begin{enumerate}[(1)]
    \item The ring $A$ is the center of  $\widetilde{Cl}$, while $A$ does not even lie in the center of  the original Clifford algebra $Cl$.    %[?] ******,
    \item there is a naturally defined grading on $\widetilde{Cl}$ that is compatible with the $\mathbb{C}_\lambda^*$-grading on $A$, which is essential in the proof of the main theorem,
    \item the ordinary tensor product $\widetilde{Cl}$ is more compatible with $A = A_+ \otimes A_-$ as shown in Proposition \ref{prop_tilde_clifford_center}, and is more readily understood than a Clifford algebra of corank $2$,
    \item consideration of twisted sheaves in Section 4 suggests tensor products of Azumaya algebras.
\end{enumerate}
\end{remark}

On top of the Clifford algebra structure, we will define a new $\mathbb{N}$-grading to handle the grading change between the $\mathbb{C}_\lambda^*$-action and the $\mathbb{C}_t^*$-actions. To handle the case of Enriques surfaces (Theorem \ref{main_thm_2}), we also need to introduce an additional $\mathbb{Z}_2$-grading corresponding to the distinction of $v_i^+$ and $v_i^-$ variables.

\begin{prop}
\label{prop_clifford_grading}
The algebras $Cl$, $Cl_\pm$ and $\widetilde{Cl}$ can be given a $(\mathbb{Z}_2 \times \mathbb{N})$-grading corresponding to the action of $G = \mathbb{Z}_2 \times \mathbb{C}_\lambda^*$ as follows:
\[
\begin{cases}
u_1, u_2, u_3: & \text{degree } (0, 2),\\
v_1^+, v_2^+, v_3^+: & \text{degree } (0, 1), \\
v_1^-, v_2^-, v_3^-: & \text{degree } (1, 1).
\end{cases}
\]
(Here the grading group $\mathbb{Z}_2 = \{0, 1\}$ is in additive notation.) Notice that the $\mathbb{N}$-grading extends the ordinary Clifford grading which counts the parity of the Clifford variables, in the sense that $Cl_{ev}$ consists of elements of even degree in the $\mathbb{N}$-grading, but $\mathbb{C}[u]$ is not contained in the degree $0$ part.
\end{prop}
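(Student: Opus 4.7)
The plan is a direct verification that each defining relation is bihomogeneous with respect to the proposed $(\mathbb{Z}_2 \times \mathbb{N})$-grading; since the relations generate two-sided ideals, bihomogeneity of the generators passes to a well-defined bigrading on each quotient algebra, which then realizes the corresponding $G$-action and the addendum about $Cl_{ev}$.

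I would compute bidegrees monomial by monomial. In a self-relation $v_i^\pm v_j^\pm + v_j^\pm v_i^\pm + 2q_u(v_i^\pm, v_j^\pm)$, the two quadratic monomials have bidegree $(0,1)+(0,1)=(0,2)$ in the $++$ case and $(1,1)+(1,1)=(0,2)$ in the $--$ case (the first entry added modulo $2$), and $q_u(v_i^\pm, v_j^\pm)$ is a $\mathbb{C}$-linear combination of the $u_k$'s, hence also of bidegree $(0,2)$. For the cross relations $v_i^+ v_j^- \pm v_j^- v_i^+$ appearing in $Cl$ and $\widetilde{Cl}$, each monomial has bidegree $(0,1)+(1,1)=(1,2)$. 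So every defining relation is bihomogeneous, and each of $Cl_+$, $Cl_-$, $Cl$, $\widetilde{Cl}$ inherits the claimed bigrading.

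The induced $G$-action then lets $(s,\lambda)\in\mathbb{Z}_2\times\mathbb{C}_\lambda^*$ scale a bihomogeneous element of bidegree $(\epsilon,n)$ by $s^\epsilon\lambda^n$; on the generators this reads $(s,\lambda)\cdot u_i = \lambda^2 u_i$, $(s,\lambda)\cdot v_i^+ = \lambda v_i^+$, $(s,\lambda)\cdot v_i^- = s\lambda v_i^-$, whose restriction to $\mathbb{C}[u]$ recovers the $G$-action of Definition \ref{def_root_stack}. For the final assertion, every Clifford monomial $u^\alpha v^\beta$ has $\mathbb{N}$-degree $2|\alpha| + |\beta|$, of parity $|\beta| \bmod 2$, matching the standard Clifford $\mathbb{Z}_2$-grading; hence $Cl_{ev}$ is precisely the $\mathbb{N}$-even part, whereas the $u_i$'s live in $\mathbb{N}$-degree $2$ rather than $0$. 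No substantial obstacle arises — the proof is a routine homogeneity check — but one should be careful that the first coordinates of bidegrees are added modulo $2$, so that the anticommutation relations among the $v_i^-$'s close up consistently to first coordinate $0$ rather than spuriously escaping $\mathbb{Z}_2$.
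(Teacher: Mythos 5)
Your proof is correct and follows essentially the same approach as the paper's, namely a direct check that each defining relation of $Cl$, $Cl_\pm$, $\widetilde{Cl}$ is bihomogeneous for the proposed $(\mathbb{Z}_2\times\mathbb{N})$-degrees, using that $q_u(v_i^\pm,v_j^\pm)=\sum_k u_k\,q_k(v_i^\pm,v_j^\pm)$ with $q_k(v_i^\pm,v_j^\pm)\in\mathbb{C}$. The paper's proof is a one-line assertion of this homogeneity; yours spells out the bidegree bookkeeping (including the mod-$2$ addition in the first coordinate) and explicitly matches the induced $G$-action to Definition~\ref{def_root_stack}, which is a more detailed write-up of the same argument.
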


\begin{proof}
With the relations of the algebras listed above, one can verify that they are homogeneous with respect to the assigned $(\mathbb{Z}_2 \times \mathbb{N})$-degrees, keeping in mind that in the relations $q_u(v_i^\pm, v_j^\pm) = \sum_k u_k q_k(v_i^\pm, v_j^\pm)$ and $q_k(v_i^\pm, v_j^\pm) \in \mathbb{C}$.
\end{proof}

\begin{definition}
\label{def_veronese}
For an $\mathbb{N}$-graded ring $R$, define another $\mathbb{N}$-graded ring $R^{(d)} := \bigoplus_{i \geq 0} R_{d \cdot i}$, and its $i$-th graded piece to be $R_{d \cdot i}$.
\end{definition}

With this notation, the even part $Cl_{ev}$ of the Clifford algebra $Cl$ can be denoted by $Cl^{(2)}$ and so on. We observe  that, as $\mathbb{C}$-vector spaces,
\[
\widetilde{Cl}_{ev} = (Cl_+ \otimes Cl_-)_{ev} = ({Cl_+}_{ev} \otimes {Cl_-}_{ev}) \oplus ({Cl_+}_{odd} \otimes {Cl_-}_{odd}) = Cl_{ev}.
\]
It is not obvious, but crucial for our construction that  they are isomorphic as $\mathbb{C}[u]$-algebras.
\begin{prop}
\label{isomorphism_veronese}
$Cl^{(2)} (= Cl_{ev})$ and $\widetilde{Cl}^{(2)}$ are isomorphic as $\mathbb{C}[u]$-algebras.
\end{prop}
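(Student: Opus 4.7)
The plan is to exhibit an explicit $\mathbb{C}[u]$-algebra isomorphism $\phi \colon Cl^{(2)} \to \widetilde{Cl}^{(2)}$ obtained from the natural vector-space identification by rescaling one summand. First I would observe that, since the relations among the $v_i^+$'s alone (resp.\ the $v_i^-$'s alone) are identical in $Cl$ and $\widetilde{Cl}$, the $\mathbb{C}[u]$-subalgebras they generate are in both cases identified with $Cl_+$ and $Cl_-$. As $\mathbb{C}[u]$-modules one therefore has $Cl \cong Cl_+ \otimes_{\mathbb{C}[u]} Cl_- \cong \widetilde{Cl}$; the only difference lies in the multiplication, which for $Cl$ is the super tensor product and for $\widetilde{Cl}$ is the ordinary tensor product. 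Restricting to elements whose total Clifford degree is even yields in both cases the same underlying $\mathbb{C}[u]$-module
\[
\bigl((Cl_+)_{ev} \otimes_{\mathbb{C}[u]} (Cl_-)_{ev}\bigr) \;\oplus\; \bigl((Cl_+)_{odd} \otimes_{\mathbb{C}[u]} (Cl_-)_{odd}\bigr),
\]
with $\mathbb{C}[u]$ sitting inside the first summand.

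Next I would define $\phi$ to be the $\mathbb{C}[u]$-linear map acting as the identity on the even-even summand and as multiplication by $i = \sqrt{-1}$ on the odd-odd summand. This is manifestly a $\mathbb{C}[u]$-module isomorphism, with inverse given by scaling the odd-odd summand by $-i$. To check multiplicativity, I would enumerate the four products determined by parity. The super tensor rule $(a_1 \otimes b_1)(a_2 \otimes b_2) = (-1)^{|b_1||a_2|} a_1 a_2 \otimes b_1 b_2$ produces a sign only when both $b_1$ and $a_2$ are odd; in the even part this happens precisely in the odd-odd times odd-odd case, where the sign is $-1$, and the product lands back in the even-even summand. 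On the $\widetilde{Cl}^{(2)}$ side no such sign is present, but in this case $\phi$ contributes two factors of $i$ whose product $i^2 = -1$ exactly cancels the discrepancy. In the three remaining cases (even-even times even-even, and the two mixed cases) both sides carry no sign, and $\phi$ contributes the same overall power of $i$ to both sides, so the multiplicativity identity holds on the nose.

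There is no real obstacle here: the argument reduces to a four-case parity check, and $\mathbb{C}[u]$-linearity is automatic because $\mathbb{C}[u]$ is concentrated in the even-even summand where $\phi$ acts as the identity. The only genuine choice is the scalar, but any square root of $-1$ would serve; the conceptual content is simply that the difference between the super and ordinary tensor product is a sign supported on a single summand, and this sign can be absorbed by the square of a scalar acting linearly on that summand.
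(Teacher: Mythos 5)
Your proof is correct, and it takes a genuinely different route from the paper's. The paper constructs an isomorphism $\varphi$ monomial by monomial, sending a standard-order monomial $\prod_{i=1}^m v_i^+ \prod_{j=1}^n v_j^-$ to $(-1)^{m(m-1)/2}$ times the corresponding element of $\widetilde{Cl}^{(2)}$, and then verifies multiplicativity by a parity calculation showing that $nk + \frac{(m+k)(m+k-1)}{2} - \frac{m(m-1)}{2} - \frac{k(k-1)}{2} = (n+m)k$ is even because $m+n$ is. Your map is instead defined once and for all on the intrinsic parity decomposition $\bigl((Cl_+)_{ev}\otimes(Cl_-)_{ev}\bigr) \oplus \bigl((Cl_+)_{odd}\otimes(Cl_-)_{odd}\bigr)$, as the identity on the first summand and multiplication by $i$ on the second; multiplicativity then reduces to noticing that the super-tensor sign is $-1$ only in the odd-odd times odd-odd case, where the two factors of $i$ supply the compensating $i^2 = -1$. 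Your version is shorter and more transparent, at the small cost of invoking $\sqrt{-1}$: it uses that $\mathbb{C}$ is the ground field, whereas the paper's purely sign-based scalar would make sense verbatim over any commutative base ring. Note also that the two isomorphisms are genuinely distinct as maps (the paper's scalar $(-1)^{m(m-1)/2}$ on a standard monomial is a function of $m \bmod 4$, while your scalar $i^{\,m \bmod 2}$ depends only on $m \bmod 2$), but either one establishes the proposition.
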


\begin{proof}
%We need to verify that the even parts are isomorphic as $\mathbb{C}$-algebras.
We recall that the algebras $Cl^{(2)}$ and $\widetilde{Cl}^{(2)}$ are defined using the $\mathbb{N}$-grading in Proposition \ref{prop_clifford_grading}. We will establish an explicit isomorphism between these two algebras. The generators of $Cl^{(2)}$ over $\mathbb{C}[u]$ are $v_i^+ v_j^+$, $v_i^- v_j^-$ and $v_i^+ v_j^-$. A product of them can be rearranged into the ``standard order'', i.e. so that the $v_i^+$'s are on the left
\[
\prod_{i=1}^m v_i^+ \prod_{j=1}^n v_j^-,
\]
where $m+n$ is even. We construct an isomorphism $\varphi: Cl_{ev} \to (Cl_+ \otimes Cl_-)_{ev}$ by sending the elements
%sending the ``standard form'' to
\[
\varphi: \quad \prod_{i=1}^m v_i^+ \prod_{j=1}^n v_j^- \quad \mapsto \quad (-1)^\frac{m(m-1)}{2} \prod_{i=1}^m (v_i^+ \otimes 1) \prod_{j=1}^n (1 \otimes v_j^-).
\]
To verify that it is an algebra homomorphism, we need to show that the image of a product before rearranging
\[
\left( \prod_{i=1}^m v_i^+ \prod_{j=1}^n v_j^- \right) \left( \prod_{i=m+1}^{m+k} v_i^+ \prod_{j=n+1}^{n+\ell} v_j^- \right) \quad \mapsto \quad (-1)^\frac{m(m-1)+k(k-1)}{2} \prod_{i=1}^{m+k} (v_i^+ \otimes 1) \prod_{j=1}^{n+\ell} (1 \otimes v_j^-),
\]
is the same as the image we get if we arrange the product into the standard order beforehand,
\[
(-1)^{nk} \prod_{i=1}^{m+k} v_i^+ \prod_{j=1}^{n+\ell} v_j^- \quad \mapsto \quad (-1)^{nk + \frac{ (m+k)(m+k-1)}{2}} \prod_{i=1}^{m+k} (v_i^+ \otimes 1) \prod_{j=1}^{n+\ell} (1 \otimes v_j^-).
\]
We see that the only difference is the exponent of the scalar $(-1)$, so we check that
\[
\left[nk + \frac{ (m+k)(m+k-1)}{2}\right] - \left[\frac{m(m-1)+k(k-1)}{2}\right] = nk + mk = 0 \hskip -5pt\mod{2},
\]
because $m+n$ is even.
\end{proof}

\begin{remark}
Note that the isomorphism works in the more general context of $\mathbb{Z}/2$-graded algebras. See \cite{deligne2} and \cite[Chapter III, Section 4.6-4.7]{bourbaki} for similar constructions.
\end{remark}

We will now work to prove that $\widetilde{Cl}$ gives a sheaf of Azumaya algebras on $\Spec A\backslash 0$. We first prove that there is an algebra homomorphism $A\to \widetilde{Cl}$
which sends $y_+$ and $y_-$ to the central elements of $Cl_+$ and $Cl_-$ respectively, which can be explicitly computed. The following proposition is key to this claim.
\begin{prop}
\label{prop_clifford_center}
The center of $Cl_+$ is isomorphic to $\mathbb{C}[u][d_+]/\langle d_+^2 - f_+ \rangle$ where
\[
d_+ = v^+_1 v^+_2 v^+_3 - q^+_{32} v^+_1 + q^+_{31} v^+_2 - q^+_{21} v^+_3.
\]
A similar statement holds for $Cl_-$.
\end{prop}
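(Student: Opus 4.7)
The plan is to establish three claims: (a) $d_+$ lies in the center of $Cl_+$; (b) $d_+^2 = f_+$; and (c) $\{1, d_+\}$ is a free $\mathbb{C}[u]$-basis of the center. First recall that $Cl_+$ is free of rank $8$ over $\mathbb{C}[u]$ with PBW basis given by the ordered products of the $v_i^+$'s. Since $v_1^+, v_2^+, v_3^+$ generate $Cl_+$ as a $\mathbb{C}[u]$-algebra, claim (a) reduces to the three commutator computations $[v_i^+, d_+] = 0$. A direct calculation using the defining relations $v_i^\pm v_j^\pm + v_j^\pm v_i^\pm = -2 q_{ij}$ shows that $[v_i^+, v_1^+ v_2^+ v_3^+]$ is a $\mathbb{C}[u]$-linear combination of degree-$2$ monomials $v_j^+ v_k^+$ with coefficients involving the $q_{jk}$'s, and the linear corrections $-q_{32} v_1^+ + q_{31} v_2^+ - q_{21} v_3^+$ are chosen precisely so that these contributions cancel.

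For (b), observe that $d_+$ is odd under the standard $\mathbb{Z}_2$-parity grading of $Cl_+$, so $d_+^2$ lies in the even component. Expanding $d_+^2$ and reducing via the Clifford relations, all non-scalar monomials cancel and a homogeneous cubic scalar in the $q_{ij}$'s remains. This scalar is identified with $f_+ = \det(q^+)$ either by direct bookkeeping or by an invariance shortcut: $d_+^2$ transforms as a $GL(V_+)$-invariant cubic of the symmetric matrix $[q_{ij}]$, so it must be a scalar multiple of its determinant, and the scalar is pinned down by evaluating on a single diagonal sample form.

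For (c), decompose any central element $c \in Cl_+$ as $c = c_{ev} + c_{odd}$ via the parity grading. Since every $v_i^+$ is odd, $c_{ev}$ and $c_{odd}$ must each be central. After inverting $f_+$, $Cl_+^{ev}$ becomes a rank-$4$ Azumaya (quaternion) algebra over $\mathbb{C}[u][f_+^{-1}]$, so its center is $\mathbb{C}[u][f_+^{-1}]$; combined with the $\mathbb{C}[u]$-freeness of $Cl_+^{ev}$ this forces $c_{ev} \in \mathbb{C}[u]$. For the odd piece, write $c_{odd} = \lambda_0 v_1^+ v_2^+ v_3^+ + \sum_i \lambda_i v_i^+$; imposing $[v_i^+, c_{odd}] = 0$ and comparing PBW coefficients forces $(\lambda_1, \lambda_2, \lambda_3) = \lambda_0 \cdot (-q_{32}, q_{31}, -q_{21})$, so $c_{odd} = \lambda_0 d_+$. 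The main obstacle is step (b): reducing $d_+^2$ to the scalar $f_+$ is mechanical in principle but demands careful sign tracking through a six-fold Clifford product, while the invariance shortcut requires identifying the correct $GL(V_+)$-equivariance of the pseudoscalar.
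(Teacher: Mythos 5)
Your proposal is correct and runs largely parallel to the paper's proof, but with two worthwhile differences in step (c). For the \emph{odd} part of the center, you write a general odd element $c_{\mathrm{odd}} = \lambda_0\, v_1^+v_2^+v_3^+ + \sum_i \lambda_i v_i^+$ and show the commutator constraints force $\lambda_i = \lambda_0 \cdot(-q_{32}, q_{31}, -q_{21})_i$; the paper's text instead asserts the central element "must be of the form" $v_1 v_2 v_3 + \sum_i r_i v_i$ by appealing to the diagonal (standard-form) case and then solves for the $r_i$. Your version is the more airtight way to phrase the same computation, since the paper's ansatz presupposes the coefficient of $v_1v_2v_3$ is a unit. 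For the \emph{even} part, the paper takes a general $z = r_1 v_2 v_3 + r_2 v_3 v_1 + r_3 v_1 v_2$, sets $[z, v_1] = 0$, localizes to $\mathbb{C}(u)$, diagonalizes $q^+$ to a form $\mathrm{diag}(1,1,\det q^+)$, and reads off $r_i = 0$. You instead invoke that $Cl_+^{\mathrm{ev}}$ becomes a rank-$4$ Azumaya (quaternion) algebra over $\mathbb{C}[u][f_+^{-1}]$, hence has trivial center there, and conclude by freeness of $Cl_+^{\mathrm{ev}}$ over the domain $\mathbb{C}[u]$ that the non-scalar PBW coefficients vanish. This is a more conceptual argument and avoids the explicit diagonalization, at the cost of importing the Azumaya fact as a black box; the paper's route is more self-contained but more computational. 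Steps (a) and (b) match the paper: the paper verifies $d_+^2 = f_+$ by direct expansion, exactly as your primary route does, and your $GL(V_+)$-invariance shortcut for (b) is a legitimate alternative not used in the paper.
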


\begin{proof}
We will suppress $()^+$ from our notation for ease of readability. Note that $v_i$ have degree one and $u_i$ have degree $2$ (with $q_{ij}$ being linear forms in $u_i$).
An element in $Cl_+$ of odd degree must be of the form
\[
z = r_0v_1 v_2 v_3 + r_1 v_1 + r_2 v_2 + r_3 v_3
\]
where $r_i \in \mathbb{C}[u]$. It is central if and only if it commutes with $v_1$, $v_2$ and $v_3$.
Then we compute
\begin{align*}
v_1 v_2 v_3 v_1 &= v_1 v_2(2q_{31}-v_1 v_3) = 
2 v_1 v_2 q_{31} - v_1 (2q_{21}-v_1v_2)v_3 = 2v_1 v_2 q_{31} - 2v_1 q_{21} v_3 + q_{11} v_2 v_3,
\\
zv_1 &= r_0v_1 v_2 v_3 v_1 + r_1 q_{11} + r_2 v_2 v_1 + r_3 v_3 v_1
= r_0(2v_1 v_2 q_{31} - 2v_1 q_{21} v_3 + q_{11} v_2 v_3) 
\\&\hskip 30pt + r_1 q_{11} + 2r_2 q_{21} - r_2 v_1 v_2 + 2r_3 q_{31} - r_3 v_1 v_3,
\\
v_1 z &= q_{11}r_0 v_2 v_3 + r_1 q_{11} + r_2 v_1 v_2 + r_3 v_1 v_3,
\\
[z, v_1] &= z v_1 - v_1 z = 2r_0v_1 v_2 q_{31} - 2r_0v_1 q_{21} v_3 + 2r_2 q_{21} - 2r_2 v_1 v_2 + 2r_3 q_{31} - 2r_3 v_1 v_3.
\end{align*}
Setting this to $0$, we get $r_2 = r_0q_{31}$ and $r_3 = -r_0q_{21}$. Consideration of the commutators $[z, v_2]$ and $[z, v_3]$ gives $r_1 = -r_0q_{32}$ and consistent values of $r_1, r_2, r_3$. Thus we see that $z$ is central if and only if 
$$
z\in  (  v_1 v_2 v_3 - q_{32} v_1 + q_{31} v_2 - q_{21} v_3)\C[u].
$$
 Furthermore, it can be verified that for $d= v_1 v_2 v_3 - q_{32} v_1 + q_{31} v_2 - q_{21} v_3$
\begin{align*}
d^2&= v_1 v_2 v_3 v_1 v_2 v_3 - q_{32}v_1 v_2 v_3 v_1 + q_{31} v_1 v_2 v_3 v_2 - q_{21} v_1 v_2 q_{33} \\
&=  - q_{32} q_{11} v_2 v_3 + q_{32}^2 q_{11} - q_{32} q_{31} v_1 v_2 + q_{32} q_{21} v_1 v_3 \\
&\hskip 20pt + q_{31} v_2 v_1 v_2 v_3 - q_{31} q_{32} v_2 v_1 + q_{31}^2 q_{22} - q_{31} q_{21} v_2 v_3 \\
&\hskip 20pt- q_{21} v_3 v_1 v_2 v_3 - q_{21} q_{32} v_3 v_1 - q_{21} q_{31} v_3 v_2 + q_{21}^2 q_{33} \\
&= q_{11} q_{22} q_{33} + q_{12} q_{23} q_{31} + q_{13} q_{21} q_{32} - q_{11} q_{23} q_{32} - q_{12} q_{21} q_{33} - q_{13} q_{22} q_{31} \\
&= \det(q).
\end{align*}
Similarly, for the even part we have 
$$z = r_0+r_1 v_2 v_3 + r_2 v_3 v_1 + r_3 v_1 v_2.$$
Then 
$[v_iv_j,v_k] = 2q_{jk}v_i  -2q_{ik} v_j $ so 
$[z, v_1] = 0$, $[z,v_2]=0$ and $[z,v_3]=0$ give
$$
\left\{
\begin{array}{l}
 r_3 q_{21}-r_2 q_{31}  = 0 \\
r_1 q_{31} - r_3 q_{11} = 0 \\
 r_2 q_{11} -r_1 q_{21} = 0
\end{array}
\right.
~~
\left\{
\begin{array}{l}
 r_3 q_{22}-r_2 q_{32}  = 0 \\
r_1 q_{32} - r_3 q_{12} = 0 \\
 r_2 q_{12} -r_1 q_{22} = 0
\end{array}
\right.
~~
\left\{
\begin{array}{l}
 r_3 q_{23}-r_2 q_{33}  = 0 \\
r_1 q_{33} - r_3 q_{13} = 0 \\
 r_2 q_{13} -r_1 q_{23} = 0
\end{array}
\right.
$$
respectively. We can work in the field of fractions $\C(u_1,u_2,u_3)$ and observe that the above equations imply that $(r_1,r_2,r_3)$ is proportional to 
$$
(q_{1i},q_{1i},q_{1i}) 
$$
for all $i$, with coefficient in $\C(u_1,u_2,u_3)$. However, the columns of the matrix $(q_{ij})$ are not proportional to each other, because its determinant is nonzero.
Thus, $(r_1,r_2,r_3)$ must be $(0,0,0)$, and we see that the even part of the center of $Cl_+$ is $\C[u]$.
\end{proof}

\begin{prop}
\label{prop_tilde_clifford_center}
The center of $\widetilde{Cl}$ is naturally isomorphic to $A$, with $d_\pm$ identified with $y_\pm$.
\end{prop}

\begin{proof}
This simply follows from Proposition  \ref{prop_clifford_center} and the fact that both $Cl_+$, $Cl_-$ are their centers are free $\C[u]$-modules. We consider the  bases of $\Cl_\pm$ made of non-repeating monomials in $v_i^\pm$, which we denote by $v_I^+$ and $v_I^-$ where $I,J$ are subsets of the index set $\{1,2,3\}$. Then for any central element of the tensor product 
$$
z=\sum_{I,J\subseteq \{1,2,3\}}  g_{I,J}(u)
v_I^+ \otimes v_J^-
$$
the vanishing of the commutators with $Cl_+\otimes 1$ implies that for each $J$ the elements $\sum_{I} g_{I,J} v_I^+$ are in $\C[u] + d^+\C[u]$. We can then rewrite
$z$ as
$$
z = 1\otimes \sum_{J \subseteq \{1,2,3\}}  g_{\emptyset,J}(u)  V_J^- + d_+\otimes  \sum_{J} h_J(u)  V_J^-.
$$
When we take a commutator with elements of $1\otimes Cl_-$, and use linear independence of $1$ and $d_+$, we see that both sums in the above equations must be central in $Cl_-$.
\end{proof}

\begin{remark}
\label{rmk_clifford_embedding}
We cannot embed $A$ in the full Clifford algebra $Cl$ because $y_+$ and $y_-$ commute in $A$, while $d_+$ and $d_-$ do not commute in $Cl$. This is why we must use the ordinary tensor product $\widetilde{Cl}$ of $Cl_+$ and $Cl_-$.
\end{remark}

\begin{prop}
\label{prop_azumaya_over_a}
The sheaf that corresponds to  $\widetilde{Cl}$  is a sheaf of  Azumaya algebras over $\Spec A\backslash 0$.
\end{prop}

\begin{proof}
First, we look at the diagram of algebra inclusions, in which each small diamond is a tensor product:
\[
\xymatrixcolsep{1.5pc}\xymatrixrowsep{1.5pc}\xymatrix
{
\ & \ & \widetilde{Cl} = Cl_+ \otimes Cl_- & \ & \ \\
\ & Cl_+[y_-]/\sim \ar[ur]^{\text{Azumaya}} & \ & Cl_-[y_+]/\sim \ar[ul]_{\text{Azumaya}} & \ \\
Cl_+ \ar[ur] & \ & A \ar[uu]_{\text{Azumaya}} \ar[ul]_{\text{Azumaya}} \ar[ur]^{\text{Azumaya}} & \ & Cl_- \ar[ul] \\
\ & A_+ = \mathbb{C}[u, y_+]/\langle y_+^2 - f_+\rangle \ar[ul]_{\text{Azumaya}} \ar[ur] & \ & A_- = \mathbb{C}[u, y_-]/\langle y_-^2 - f_-\rangle \ar[ul] \ar[ur]^{\text{Azumaya}} & \ \\
\ & \ & \mathbb{C}[u] \ar[ul] \ar[ur] & \ & \ \\
}
\]
We will prove that the maps labelled by ``Azumaya" give sheaves Azumaya algebras away from the origin $u=0$ in $\C^3$. We use the fact that:
\begin{enumerate}[(i)]
    \item the pullback of an Azumaya algebra is Azumaya, and
    \item the tensor product of two Azumaya algebras is also Azumaya.
\end{enumerate}
So in order to show that $A \to \widetilde{Cl}$ is Azumaya, we just need to show it for the maps $A_+ \to Cl_+$ and $A_- \to Cl_-$ at the lower left and right. It will be proved in a similar fashion as proposition 3.13 in Kuznetsov's paper \cite{kuznetsov}.\\
%We have reduced the statement in question to the Azumaya-ness of the maps at the lower left and lower right.
\\
%As shown before, the center of $Cl_+$ is $B_+$ via the map $y_+ \mapsto d_+$.
As being an Azumaya algebra is a local property, it suffices to check that the fiber of $Cl_+$ at each point of $\Spec A_+ \backslash 0$ is a matrix algebra. On the algebra level, $y_+$ is sent to $d_+$. For a point $(u, y_+) \in \Spec A_+ \backslash 0$ such that $y_+ \neq 0$, taking the fiber of $Cl_+$ means fixing $u$ and $y_+$. For fixed $u$, the quadratic form $q^+_u$ is of full rank and it is well-known that fiber $Cl(q^+_u)$ is a product of two rank $2$ matrix algebras, which comes from its two irreducible Clifford modules classified by the action of $d_+$, so fixing $y_+ = d_+$ means choosing one of the two components. For points of the form $(u, 0)$, the quadratic form $q^+_u$ is of corank $1$ and the maximal ideal at $(u,0)$ is generated by $y_+$, so the fiber of $Cl_+$ is $Cl(q^+_u)/\langle d_+\rangle$ which is isomorphic to a matrix algebra of rank $2$.
\end{proof}

Now we have all the tools to carry out the main step of the proofs of Theorems \ref{main_thm_1} and \ref{main_thm_2}.
Let $\mathcal A$ be the sheaf of algebras associated to $\widetilde{Cl}$ on $\mathscr{Y}$ with the $\mathbb{N}$-grading corresponding to the $\mathbb{C}_\lambda^*$-action. The idea is to connect the sheaf $\mathcal A$ to the one constructed in \cite{kuznetsov} by utilizing a Veronese-type theorem from \cite{verevkin}.

\medskip
Let $R$ be a $\mathbb{N}$-graded left Noetherian ring which is generated by $R_1$ as a $R_0$-algebra. We denote by $\Cohproj(R)$ the localization of
the category of finitely generated graded right modules over $R$ by the subcategory modules which are finitely generated over $R_0$. 
\begin{lem}\cite{verevkin}
\label{lem_veronese}
Then the categories $\Cohproj(R^{(d)})$ are equivalent for any natural number $d$. 
\end{lem}

An immediate consequence of Lemma \ref{lem_veronese} is that it connects our setting to Kuznetsov's main theorem, which is included here for readers' benefit.

\begin{prop}
\label{main_thm_kuznetsov}\cite{kuznetsov} Let $V = \mathbb{C}^6$ with coordinates $x_1, \ldots, x_6$ and $W \subseteq S^2V^\vee$ with coordinates $u_1, u_2, u_3$ a dimension $3$ subspace of rank $\geq 4$ (except the origin) homogeneous quadratic forms on $V$ such that the intersection $X$ of quadrics parametrized by $W$ is a complete intersection. Let $q \in \mathbb{C}[u_i, x_j]$ be the quadratic form corresponding to the choice of $W$. Let $Y$ be the double cover of $\mathbb{P}W$ ramified over the sextic curve of degenerate quadrics, and $Cl(V)$ be the sheaf of Clifford algebras on $Y$ associated to $q$. Then we have derived equivalences:
%With the notations in Section 1, $Y$ being the double cover of $\mathbb{P}W$ ramified over the sextic curve ****** define $Cl$ ****** , we have %****** See background in Section 1 for notations etc. ...
\[
\mathcal{D}^b(\Coh-X) \quad \simeq \quad \mathcal{D}^b(\Coh-\mathbb{P}W,\, Cl(V)_{ev}) \quad \simeq \quad \mathcal{D}^b(\Coh-Y,\, Cl(V)_{ev}).
\]
Here $Cl(V)_{ev}$ is equipped with the $\mathbb{C}_t^*$-grading and $Cl(V)$ with the $\mathbb{C}_\lambda^*$-grading.
\end{prop}

\emph{Proof of Theorem \ref{main_thm_1}.}
Lemma \ref{lem_veronese} and Proposition \ref{isomorphism_veronese} tells us that the categories
$$
D^b( \Coh-\mathbb{P}W,\, Cl_{ev})=\Cohproj (Cl_{ev})\simeq \Cohproj (\widetilde{Cl}_{ev})$$ 
and
$$
\quad \Cohproj (\widetilde{Cl}) = \quad D^b(\Coh-\mathscr{Y},\, \widetilde{Cl})
$$
are equivalent, which translates to the equivalence of $\Cohproj (\widetilde{Cl}_{ev})$ and $\Cohproj( \widetilde{Cl})$. Here we used 
This allows us to apply Proposition
\ref{main_thm_kuznetsov} to conclude that
\[
D^b(\Coh-X) \quad \simeq \quad D^b( \Coh-\mathbb{P}W,\, Cl_{ev}) \quad \simeq \quad D^b(\Coh-\mathscr{Y},\, \widetilde{Cl}).
\]
\hfill$\Box$

Let $\widehat{\mathcal A}$ be the sheaf of algebras associated to $\widetilde{Cl}$ on $\widehat{\mathscr Y}$ with the $(\mathbb{Z}_2 \times \mathbb{N})$-grading corresponding to the $G$-action.

\medskip
\emph{Proof of Theorem \ref{main_thm_2}.}
%The proof is the same as the last proof, except that $\widetilde{\alpha}$ is chosen to be the sheaf of algebras associated to $\widetilde{Cl}$ on $\mathscr{Y}/\mathbb{Z}_2$ with the $(\mathbb{Z}_2 \times \mathbb{N})$-grading corresponding to the $G$-action. ******
Let $\widehat{\mathcal A}$ be the sheaf of algebras associated to $\widetilde{Cl}$ on $\widehat{\mathscr Y}$ with the $(\mathbb{Z}_2 \times \mathbb{N})$-grading corresponding to the $G$-action. Considering the induced $\mathbb{N}$-grading on $\widetilde{Cl}$, we can again apply Lemma \ref{lem_veronese} to conclude that the equivalence of categories $\Cohproj (\widetilde{Cl}_{ev})$ and $\Cohproj (\widetilde{Cl})$. We can then use \cite[Theorem 6.2]{borisov}  to get
\[
D^b(\Coh-(X/\sigma)) \simeq D^b( \Coh-[\mathbb{P}W / \mathbb{Z}_2],\, Cl_{ev}) \simeq D^b( \Coh-[\mathbb{P}W / \mathbb{Z}_2],\, \widetilde{Cl}_{ev}) \simeq D^b(\Coh-(\mathscr{Y}/\mathbb{Z}_2),\, \widetilde{Cl}).
\]
\hfill$\Box$

\begin{remark}
\label{rmk_element_h}
In \cite[Section 9.2]{borisov}, a semi-direct product of a Clifford algebra and $\mathbb{C}[h]/\langle h^2-1 \rangle$ is considered. This corresponds to the extra $\mathbb{Z}_2$-grading on the sheaf of algebras.
\end{remark}

\section{Severi-Brauer Varieties}

There are several formulations of the Brauer group and its elements Brauer classes on a scheme. One way is to define them as Azumaya algebras under Morita equivalence, another way is to represent them by twisted sheaves \cite{caldararu2}, whose projectivizations are projective bundles that are locally trivial in the \'{e}tale topology, namely the Severi-Brauer varieties. We will now construct projective bundles on $\mathscr{Y}$ and $\widehat{\mathscr Y}$ and show that they represents the Azumaya algebra $\widetilde{Cl}$. This gives us the connection between the Clifford algebra and the base space. We will work on $\Spec A \backslash 0$ and our constructions will be equivariant with respect to the action of the group $G$.

\medskip
To construct the Brauer class on $\Spec A \backslash 0$  as a projective bundle and to connect it to the Clifford algebra involves several steps:
\begin{enumerate}[(1)]
    %\item Construct two $\mathbb{P}^1$-bundles on $Y$ ($\Spec A_{f_+, f_-} \backslash \{0\}$ ?) from the considerations of $V_+$ and $V_-$.
    \item Construct two $\mathbb{P}^1$-bundles on $\Spec A_\pm \backslash 0$ from the data of $q^\pm$.
    \item Construct a $\mathbb{P}^3$-bundle on $\Spec A \backslash 0$ from the two $\mathbb{P}^1$-bundles.
    \item Exhibit an isomorphism from the projectivization of the Clifford module to the $\mathbb{P}^1$-bundle.
\end{enumerate}

In the second half of this section, we will also relate the $\mathbb{P}^3$-bundle in our case to the $\mathbb{P}^3$-bundle that comes from Kuznetsov's construction which considers the quadratic form $q$ as a whole.

\medskip
Recall from Section 2 that $W$ parametrizes quadratic forms $q$ and each $q$ can be decomposed into $q = q^+ + q^-$.  Consider the conic fibration $C$ over $W \backslash 0$ whose fiber over a point $u \in W \backslash 0$ is the conic $\{q_u^+ = 0\} \subseteq \mathbb{P}V_+$. Because we restrict to a general family of such quadratic forms $q$, we can assume $C$ is smooth by Bertini's Theorem, because $C$ is a hyperplane of degree $(1,2)$ in $(W \backslash 0) \times \mathbb{P}V_+$. These are nonsingular conics for $u \in W \backslash \cone (E_+)$ so we have a generic $\mathbb{P}^1$-fibration, while the fibers over $u \in \cone (E_+)$ are pairs of lines. It is not possible to somehow replace the pairs of lines by a smooth $\P^1$ over $W\backslash 0$, but it becomes possible to do so on the double cover
$\Spec A_+\backslash 0$.
We will present 2 ways to construct such $\mathbb{P}^1$-bundles.

\medskip
First method is the following: take the conic fibration, denoted $\adj(C)$, over $W \backslash 0$ whose fiber over a point $u \in W \backslash 0$ is the conic $\{\adj(q_u^+) = 0\} \subseteq \mathbb{P}V_+^\vee$ in the dual projective space, where $\adj(q_u^+)$ denotes the quadratic form whose associated matrix is the adjugate matrix of $q_u^+$. The general fibers are then dual conics and the fibers over $u \in E_+$ become double lines. Over $W \backslash \cone (E_+)$, we have a birational map $C \to\adj(C)$ arising from the dual curve map. For convenience, denote the pullback of $\adj(C)$ to the ramified double cover $\Spec A_+ \backslash 0$ by
\[
\adj(C_+) := \adj(C) \times_{(W\backslash 0)} (\Spec A_+ \backslash 0).
\]
Take the normalization $P_+$ of $\adj(C_+)$, then the composition $P_+ \to \Spec A_+ \backslash 0$ will be a $\mathbb{P}^1$-bundle, as the next proposition shows.

\begin{prop}
\label{prop_p1_bundle_normalization}
The variety $P_+$ is a $\mathbb{P}^1$-bundle over $\Spec A_+ \backslash 0$.
\end{prop}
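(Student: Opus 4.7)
The plan is to verify that $P_+ \to \Spec A_+ \setminus 0$ is proper and flat with every geometric fiber isomorphic to $\mathbb{P}^1$; these properties force the morphism to be a Brauer--Severi scheme of relative dimension one over the regular base $\Spec A_+ \setminus 0$, hence \'etale-locally trivial. Over the open subset $U := \{y_+ \neq 0\}$, the form $q_u^+$ has rank three and the fiber $\{\adj(q_u^+) = 0\} \subset \mathbb{P}V_+^\vee$ is a smooth plane conic, so $\adj(C_+)|_U$ is already smooth with $\mathbb{P}^1$ fibers and needs no normalization; the substantive work is concentrated over the preimage of $E_+$.

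Fix a point $(u_0, 0, z_0) \in \adj(C_+)$ with $u_0 \in E_+$ and $\ell_{u_0}(z_0) = 0$. Using smoothness of $E_+$ (a consequence of genericity of $W$) together with the analog of Proposition \ref{spec_a_smooth} giving smoothness of $\Spec A_+ \setminus 0$ near this point, pick local coordinates $(t, w_1, w_2)$ on $W$ near $u_0$ with $E_+ = \{t = 0\}$ and $f_+ = tg$ for a local unit $g$, and choose affine coordinates $(\tilde z_2, \tilde z_3)$ on $\mathbb{P}V_+^\vee$ near $z_0$ with $\ell_{u_0} = \tilde z_2$. The key input is the identity $\partial_i \adj(q_u^+)(z_0)|_{u_0} = \kappa \cdot \partial_i f_+(u_0)$ for $z_0$ on the double line, with $\kappa \neq 0$ a scalar depending on $z_0$; this follows from the explicit formula for $D\adj(M_0)(B)$ when $M_0$ has rank two. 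Substituting $t = y_+^2/g$ into the Taylor expansion of $\adj(q_u^+)(z)$ and using this identity, the defining equation of $\adj(C_+)$ takes the \'etale-local normal form $(\tilde z_2 - a y_+)(\tilde z_2 + a y_+) = 0$ for some local unit $a$, exhibiting $\adj(C_+)$ as two smooth branches meeting along the preimage of $E_+$, with the normalization $P_+$ separating them \'etale-locally.

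Global irreducibility of $\adj(C_+)$ (inherited from the irreducible generic fiber) forces $P_+$ to be a single connected smooth cover of $\adj(C_+)$, with the monodromy of the double cover $\Spec A_+ \to W$ exchanging the two \'etale branches; the fiber of $P_+$ over each point of the ramification locus is then a reduced $\mathbb{P}^1$ parametrized by $\tilde z_3$. The hardest step is the local normal-form computation --- carefully handling the higher-order corrections to the factorization, and verifying that the \'etale-local branches fit together globally into a connected $P_+$ rather than splitting into two components --- after which smoothness of $P_+$, flatness via miracle flatness, and the $\mathbb{P}^1$-bundle conclusion all follow.
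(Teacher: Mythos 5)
Your overall strategy (local analysis of the adjugate conic fibration near a point of the double-line fiber, followed by an \'etale-local normal form and identification of the normalization) is the same as the paper's, and the Brauer--Severi framing at the top is fine in principle. But there is a genuine gap in the central local computation.

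Your key input is the identity $\partial_i \adj(q_u^+)(z_0)|_{u_0} = \kappa \cdot \partial_i f_+(u_0)$ with $\kappa \neq 0$, and from it the \'etale-local normal form $(\tilde z_2 - a y_+)(\tilde z_2 + a y_+) = 0$ with $a$ a local unit. The claim $\kappa \neq 0$ is false at two points of each double-line fiber. Diagonalize $q_u^+ = \operatorname{diag}(a(u), b(u), \epsilon(u))$ near $u_0 \in E_+$ with $\epsilon(u_0)=0$ and $a,b$ units; a short computation gives $\adj(q_u^+) = \operatorname{diag}(b\epsilon, a\epsilon, ab)$, and for $z_0 = (z_1, z_2, 0)$ on the double line one finds $\kappa = (b z_1^2 + a z_2^2)/(ab)$. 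This vanishes precisely when $(z_2/z_1)^2 = -b/a$, i.e.\ at two points of the $\mathbb{P}^1$. At those points the coefficient of $y_+^2$ in your Taylor expansion drops to higher order and the equation does not factor into two transverse branches; instead one gets a pinch-point (Whitney-umbrella) normal form $r^2 = y_+^2 s \cdot (\text{unit})$ with $s$ a coordinate on $E_+$-transverse direction, whose normalization is still smooth (introduce $\tilde r = r/y_+$ and obtain $\tilde r^2 = s$) but whose geometry is entirely different from the two-branch picture you describe. Your proposal never addresses these points.

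The downstream claim that "the fiber of $P_+$ over each point of the ramification locus is then a reduced $\mathbb{P}^1$ parametrized by $\tilde z_3$" is also incorrect: the fiber of $P_+$ over a point $(u_0, 0)$ is a smooth conic $\{\tfrac{(x_1^+)^2}{a^2 b} + \tfrac{(x_2^+)^2}{a b^2} + (\widetilde{x_3}^+)^2 = 0\}$ mapping \emph{two-to-one} to the reduced line $\{\tilde z_2 = 0\}$, branched exactly at the two special points above, not a line parametrized by $\tilde z_3$. The paper avoids all of this casework by performing the substitution $\widetilde{x_3}^+ = x_3^+/y_+$ globally in the fiber direction, writing down the explicit equation of the candidate normalization, and verifying smoothness of the total space by a single Jacobian computation; the smooth-conic fibers then come out uniformly without having to distinguish generic points of the double line from the branch points.
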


\begin{proof}
To see this, restrict to a Zariski neighborhood $U$ of a point $u_0$ lying over $\cone (E_+)$ in $\Spec A_+ \backslash 0$. The quadratic forms can be simultaneously diagonalized to have the form (abusing notations):
\[
q_u^+ = \begin{bmatrix}
a(u) & 0 & 0 \\
0 & b(u) & 0 \\
0 & 0 & \epsilon(u)
\end{bmatrix}
\]
where $\epsilon(u_0) = 0$ and $a(u)$, $b(u) \neq 0$ for $u \in U$, i.e. $a$ and $b$ are units in $\mathbb{C}[U]$. Its adjugate matrix is
\[
\adj(q_u^+) = \begin{bmatrix}
b(u)\epsilon(u) & 0 & 0 \\
0 & a(u)\epsilon(u) & 0 \\
0 & 0 & a(u)b(u)
\end{bmatrix}
\]
so at $u_0$ it is
\[
\adj(q_{u_0}^+) =
\begin{bmatrix}
0 & 0 & 0 \\
0 & 0 & 0 \\
0 & 0 & a(u_0)b(u_0)
\end{bmatrix}
\]
and in the coordinates $x_1^+, x_2^+, x_3^+$ of $V_+$ the fiber is the double line $(x_3^+)^2 = 0$.

\medskip
Now we verify that the normalization is smooth. By picking complementary coordinates $u_1$, $u_2$ to the local coordinate $\epsilon$, we identify the coordinate ring $\mathbb{C}[U]$ with $\mathbb{C}[u_1, u_2, \epsilon]$. The coordinate ring of $\adj(C)$ over $U$ is thus
\[
\mathbb{C}[U][y_+, x_1^+, x_2^+, x_3^+]/ \langle\, y_+^2 - ab\epsilon,\, b\epsilon(x_1^+)^2 + a\epsilon(x_2^+)^2 + ab (x_3^+)^2 \,\rangle.
\]
We observe that
%\[
%\frac{1}{a}(yx_1^+)^2 + \frac{1}{b}(yx_2^+)^2 + ab(x_3^+)^2 = 0
%\]
\[
\left( \frac{x_3^+}{y_+}  \right)^2 = \frac{(x_3^+)^2}{(ab\epsilon)^2} = -\frac{(x_1^+)^2}{a^2b} -\frac{(x_2^+)^2}{ab^2}
\]
belongs to the coordinate ring but $x_3^+/y_+$ does not. We adjoin $\widetilde{x_3}^+ = x_3^+/y_+$ and show that the resulting ring
\[
\mathbb{C}[U][y_+, x_1^+, x_2^+, \widetilde{x_3}^+] \bigg/ \left\langle\, y_+^2 - ab\epsilon,\, \frac{(x_1^+)^2}{a^2b} + \frac{(x_2^+)^2}{ab^2} + (\widetilde{x_3}^+)^2 \,\right\rangle
\]
is regular. The Jacobian matrix of partial derivatives of the relations with respect to $u_1$, $u_2$, $\epsilon$, $y_+$, $x_1^+$, $x_2^+$, $\widetilde{x_3}^+$ is the following.
%\[
%y_+ = 0,\,
%\frac{\partial a}{\partial u_i} b + a \frac{\partial b}{\partial u_i} = 0,\,
%\frac{\partial a}{\partial \epsilon} b + a \frac{\partial b}{\partial \epsilon} = 0,\,
%\frac{x_1^+}{a^2b} = 0,\, \frac{x_2^+}{ab^2}  = 0,\, \widetilde{x_3}^+ = 0
%\]
\[
\begin{bmatrix}
-\epsilon\frac{\partial (ab)}{\partial u_1} & -\epsilon\frac{\partial (ab)}{\partial u_2} & -ab-\epsilon\frac{\partial (ab)}{\partial \epsilon} & 2y_+ & 0 & 0 & 0 \\
* & * & * & 0 & \frac{2x_1^+}{a^2b} & \frac{2x_2^+}{ab^2} & 2\widetilde{x_3}^+
\end{bmatrix}
\]
Since $x_1^+, x_2^+, \widetilde{x_3}^+$ are coordinates on $\mathbb{P}V_+$ and cannot be all zero, the matrix has rank $1$ precisely when the first row is a multiple of the second row. This forces it to be the zero row, so $y_+ = 0$ and thus $\epsilon = 0$ by the relation, but then $ab \neq 0$ in the third entry. So there is no point at which the matrix has rank $1$.

\medskip
Since all the fibers of $P_+$ are smooth conics given by $\frac{(x_1^+)^2}{a^2b} + \frac{(x_2^+)^2}{ab^2} + (\widetilde{x_3}^+)^2 = 0$, we obtain a $\mathbb{P}^1$-bundle.
\end{proof}

In the second method, we start from the pullback
\[
C_+ := C \times_{(W\backslash 0)} (\Spec A_+ \backslash 0)
\]
of $C$ to $\Spec A_+ \backslash 0$, and consider the blowup $P_{1,+}$ of C$_+$ along its singular locus.

\begin{prop}
\label{prop_p1_bundle_blowup}
The singular locus of $C_+$ consists of the singular points in the singular fibers over $\cone(E_+)$, and the blowup $P_{1,+}$ of $C_+$ along the singular locus is smooth.
\end{prop}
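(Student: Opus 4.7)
The plan is to show that $C_+$ acquires a family of $A_1$ (ordinary double point) singularities along a smooth $2$-dimensional subvariety $L \subset C_+$, namely the nodes of the nodal conic fibers over $\cone(E_+)$, and that blowing up $L$ resolves them. The key tool is the simultaneous diagonalization of $q_u^+$ already used in the proof of Proposition \ref{prop_p1_bundle_normalization}.

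Fix $u_0 \in \cone(E_+) \setminus 0$ and, on a Zariski neighborhood $U$ of $u_0$, diagonalize $q_u^+ = \operatorname{diag}(a(u), b(u), \epsilon(u))$ with $a, b$ units and $\epsilon$ a local defining equation of $\cone(E_+)$, so $f_+ = ab\epsilon$. In the affine chart $x_3^+ = 1$ of $\mathbb{P}V_+$, the variety $C_+$ is cut out in $U \times \mathbb{A}^3_{y_+, x_1^+, x_2^+}$ by
\[
a(x_1^+)^2 + b(x_2^+)^2 + \epsilon = 0, \qquad y_+^2 - ab\epsilon = 0.
\]
A Jacobian computation in the spirit of Proposition \ref{prop_p1_bundle_normalization} shows the $2 \times 6$ Jacobian drops rank precisely where $y_+ = \epsilon = x_1^+ = x_2^+ = 0$: the $\epsilon$-column entries are $(1, -ab)$, forcing any dependency scalar to equal $-ab$, after which the $y_+$-column forces $y_+ = 0$ and the $x_1^+$- and $x_2^+$-columns force $x_1^+ = x_2^+ = 0$. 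On the remaining charts $x_1^+ = 1$ or $x_2^+ = 1$, the first equation has a nonvanishing partial derivative ($2a$ or $2b$), so no extra singular points appear. Thus $C_+$ is smooth over $(\Spec A_+ \backslash 0) \setminus \cone(E_+)$, and over $\cone(E_+) \setminus 0$ its singular locus $L$ is the smooth surface picking out the unique node $[0:0:1]$ in each nodal fiber.

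To verify smoothness of the blowup $P_{1,+}$ of $C_+$ along $L$, eliminate $\epsilon$ via the first equation to present $C_+$ locally as the hypersurface
\[
y_+^2 + a^2 b (x_1^+)^2 + ab^2 (x_2^+)^2 = 0
\]
in $\Spec \mathbb{C}[u_1, u_2, y_+, x_1^+, x_2^+]$, exhibiting $L$ as the $2$-plane $y_+ = x_1^+ = x_2^+ = 0$ and the singularities as a nondegenerate family of nodes (since $a, b$ are units). The blowup of $C_+$ along $L$ is the strict transform of $C_+$ under the blowup of $\mathbb{A}^5$ along this $2$-plane. On the chart where $y_+$ is primary (so $x_1^+ = y_+ \widetilde{x_1}$, $x_2^+ = y_+ \widetilde{x_2}$) the strict transform is $1 + a^2 b\, \widetilde{x_1}^2 + ab^2\, \widetilde{x_2}^2 = 0$, whose gradient in $(\widetilde{x_1}, \widetilde{x_2})$ cannot vanish simultaneously with the equation (the equation would force $1 = 0$); the two analogous charts give the same conclusion. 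Hence $P_{1,+}$ is smooth, and the argument is symmetric for $C_-$. The main delicacy lies in the first step, establishing the simultaneous diagonalization on a Zariski neighborhood; once the local model is in hand, both the singular-locus identification and the blowup computation are direct.
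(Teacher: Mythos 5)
Your proof is correct and follows essentially the same strategy as the paper: compute the Jacobian to locate the singular locus, use a local simultaneous diagonalization $q^+_u = \operatorname{diag}(a,b,\epsilon)$ near points of $\cone(E_+)$, and observe that the resulting local model is a family of $A_1$-singularities resolved by the blowup. The one organizational difference worth noting is that the paper proves the containment ``singular locus of $C_+$ $\subseteq$ nodes of singular fibers'' directly from the global Jacobian of the pair $(y_+^2 - f_+,\, q^+)$ without diagonalizing (the $y_+ \neq 0$ case is immediate, and $y_+ = 0$ forces $\partial q^+/\partial x_i^+ = 0$), then diagonalizes only for the reverse inclusion. Your version diagonalizes from the outset, which works but carries a small bookkeeping imprecision: before you have established $x_1^+ = x_2^+ = 0$ the $\epsilon$-column entry of the first row is $a_\epsilon(x_1^+)^2 + b_\epsilon(x_2^+)^2 + 1$, not simply $1$, so the chain of deductions should be reordered (first extract $x_1^+ = x_2^+ = 0$ from the $x^+$-columns, then the $\epsilon$-column reads $(1, -(ab)_\epsilon\epsilon - ab)$, then $y_+ = 0$ from the $y_+$-column, then $\epsilon = 0$ from $y_+^2 = ab\epsilon$). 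Your explicit elimination of $\epsilon$ and blowup-chart verification is a welcome addition; the paper asserts the $A_1$-resolution without spelling out the charts.
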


\begin{proof}
Observe that $C_+$ is cut out by $y_+^2 - f_+$ and $q^+$ inside $(\Spec A_+ \backslash 0) \times \mathbb{P}V_+$. Taking partial derivatives with respect  to the variables $u_1$, $u_2$, $u_3$, $y_+$, $x_1^+$, $x_2^+$, $x_3^+$, we get the matrix
\[
\begin{bmatrix}
-\frac{\partial f_+}{\partial u_1} & -\frac{\partial f_+}{\partial u_2} & -\frac{\partial f_+}{\partial u_3} & 2y_+ & 0 & 0 & 0 \\
\frac{\partial q^+}{\partial u_1} & \frac{\partial q^+}{\partial u_2} & \frac{\partial q^+}{\partial u_3} & 0 & \frac{\partial q^+}{\partial x_1^+} & \frac{\partial q^+}{\partial x_2^+} & \frac{\partial q^+}{\partial x_3^+}
\end{bmatrix}.
\]
Note that one of $\frac{\partial f_+}{\partial u_i}$ must be nonzero because $E_+ = \{f_+ = 0\}$ is smooth, and also it is assumed that $C_+ = \{q^+ = 0\}$ is smooth. If $y_+ \neq 0$, then the Jacobian must have rank $2$. If $y_+ = 0$, then setting rank $\leq 1$ implies that second row is a multiple of the first row, so $\frac{\partial q^+}{\partial x_1^+} = \frac{\partial q^+}{\partial x_2^+} = \frac{\partial q^+}{\partial x_3^+} = 0$. So the singular locus of $C_+$ is contained in the singular loci of the singular fibers of $C_+$.

\medskip
In the other direction, we likewise locally diagonalize $q^+$ as $q^+ = r_1(u) (x_1^+)^2 + r_2(u) (x_2^+)^2 + r_3(u) (x_3^+)^2$ where $r_1$, $r_2$ are invertible and $r_3$ is a local coordinate. Pulling back to $\Spec A_+ \backslash 0$, we have $r_3 = y_+^2 / r_1 r_2$, so we see that $C_+$ is singular when $y_+ = x_1^+ = x_2^+ = 0$. (Let $u_1$, $u_2$ be complementary coordinates to $r_3$, and note that $q^+$ lies in the square of the maximal ideal.) Hence we have the reverse inclusion.

\medskip
Since $(x_1^+, x_2^+, x_3^+) \neq (0,0,0)$, we see that $x_3^+ \neq 0$ at the singular points. The form of $q^+$ shows that we have an $A_1$-singularity along the singular locus. Therefore the blowup $P_{1,+}$ of the singular locus is resolution of $C_+$.
\end{proof}

Note that for smooth conics in $\mathbb{P}V_+$ over $W\backslash \cone(E_+)$, there is a natural map of dual conics mapping a point to the tangent line at that point and it gives a rational map $C_+ \to \adj(C_+)$, which fiber-wise it maps $(x_1^+:x_2^+:x_3^+)$  to $\left( \frac{\partial q^+}{\partial x_1^+} : \frac{\partial q^+}{\partial x_2^+} : \frac{\partial q^+}{\partial x_3^+} \right)$. We claim that it extends to a morphism $P_{1,+} \to C_+$, hence factors through the normalization $P_{1,+} \to P_+ \to \adj(C_+)$. Moreover, let $H_0 \subseteq C_+$ denote the pullback of $\cone(E_+) \subseteq \Spec A_+ \backslash 0$, which is fiber-wise the singular quadric $\{q^+ = 0\}$, and $H \subseteq P_{1,+}$ be the strict transform of $H_0$ in $P_{1,+}$, then we have the following.
%We will show that the morphism $P_{1,+} \to P_+$ is the contraction of $H$ in $P_{1,+}$, i.e. the fiber over $\cone(E_+)$ is the exceptional divisor of the blowup $P_{1,+} \to \adj(C_+)$.

\begin{prop}\label{above}
\label{prop_p1_bundle_morphism}
There exists a morphism $P_{1,+} \to \adj(C_+)$, and fiber-wise it contracts the two lines in $H \subseteq P_{1,+}$ into two points.
\end{prop}

\begin{remark}
\label{rmk_bundle_diagram}
Before proving Proposition \ref{above}, we summarize the two constructions  in the following diagram.
\[
\xymatrixcolsep{3pc}\xymatrixrowsep{2pc}\xymatrix
{
\ & P_{1,+} \ar[dr]_{\text{blow up}} \ar[ddl]_{\text{blow up}} & \ \\
\ & \ & P_+ \ar[d]_{\text{normalization}} \\
C_+ \ar@{-->}[rr]_{\text{birational}} \ar[dr] \ar[ddd] & \ & \adj(C_+) \ar[dl] \ar[ddd] \\
\ & \Spec A_+ \backslash 0 \ar[d] & \ \\
\ & W\backslash 0 & \ \\
C \ar@{-->}[rr]_{\text{birational}} \ar[ur] & \ & \adj(C) \ar[ul]
}
\]
\end{remark}

\begin{proof}
%From Proposition 4.2?, we see that the ideal of the singular locus in $C_+$ is cut out by equations
%\[
%y_+ = 0, \quad \frac{\partial q^+}{\partial x_i^+} = 0, \quad \frac{\partial f_+}{\partial u_i} \frac{\partial q^+}{\partial u_j} - \frac{\partial f_+}{\partial u_j} \frac{\partial q^+}{\partial u_i} = 0 \quad \text{for} \quad i,j = 1,2,3, \quad i \neq j.
%\]
Restricting to a Zariski neighborhood $U$ of a point $u_0$ lying over $\cone(E_+)$ in $\Spec A_+ \backslash 0$, the quadratic forms can be simultaneously diagonalized to be $q^+ = r_1(u)(x_1^+)^2 + r_2(u)(x_2^+)^2 + r_3(u) (x_3^+)^2$
%with matrix
%\[
%q_u^+ = \begin{bmatrix}
%1 & 0 & 0 \\
%0 & 1 & 0 \\
%0 & 0 & \epsilon(u)
%\end{bmatrix}
%\]
%where $\epsilon(u_0) = 0$ and $y_+^2 = \epsilon(u)$.
where $r_1$, $r_2$ are invertible and $r_3$ is a local coordinate. In this case the equations of the singular locus in $C_+$ reduce to
\[
y_+ = 0, \quad x_1^+ = 0, \quad x_2^+ = 0.
\]
Since $(x_1^+, x_2^+, x_3^+) \neq (0,0,0)$, we see that $x_3^+ \neq 0$ at $u_0$ and we can assume it is invertible in $U$. The blowup is locally $\Proj (\mathscr{O}_{C_+} + \mathcal{I} + \mathcal{I}^2 + \dots)$ where $\mathcal{I} = \langle y_+, x_1^+, x_2^+ \rangle$ is the ideal sheaf. Let $\widetilde{y_+}$, $\widetilde{x_1}^+$, $\widetilde{x_2}^+$ denote the corresponding sections in the degree $1$ piece of the Rees algebra $\mathscr{O}_{C_+} + \mathcal{I} + \mathcal{I}^2 + \dots$, then the exceptional divisor has equation 
\[
(\widetilde{x_1}^+)^2 + (\widetilde{x_2}^+)^2 + (x_3^+)^2 (\widetilde{y_+})^2 = 0 \quad (\text{see proof of Proposition } \ref{prop_p1_bundle_blowup}.).
\]
Then we see that the map $\left( \frac{\partial q^+}{\partial x_1^+} : \frac{\partial q^+}{\partial x_2^+} : \frac{\partial q^+}{\partial x_3^+} \right)$ extends to the formula
\[
\left( 2\widetilde{x_1}^+ : 2\widetilde{x_2}^+ : 2x_3^+ y_+ \widetilde{y_+} \right)
\]
to the exceptional divisor in the new coordinates. which is well-defined because the first two components cannot be simultaneously $0$: if $\widetilde{x_1}^+ = \widetilde{x_2}^+ = 0$ then $\widetilde{y_+} = 0$ from the equation of the exceptional divisor, but $(\widetilde{y_+}, \widetilde{x_1}^+, \widetilde{x_2}^+) \neq (0,0,0)$.

\medskip
Next, we see that over a point $u \in \cone(E_+)$ the quadric $q^+$ degenerates to $2$ lines passing through the origin, so the points on $q^+$ has two fixed ratios $\widetilde{x_1}^+ : \widetilde{x_2}^+$ for a given $u$. The map sends these points to $( 2\widetilde{x_1}^+ : 2\widetilde{x_2}^+ : 0 )$, i.e. fiber-wise $H$ is contracted to $2$ points.
\end{proof}

We have described the $\mathbb{P}^1$-bundle $P_+$ associated to $q^+$ and $E_+$ on $\Spec A_+ \backslash 0$. Likewise there is a $\mathbb{P}^1$-bundle $P_-$ associated to $q^-$ and $E_-$ on $\Spec A_- \backslash 0$. We can now construct a $\mathbb{P}^3$-bundle on $\Spec A \backslash 0$ from $P_+$ and $P_-$. We denote the pullbacks of $P_\pm$ from $\Spec A_\pm \backslash 0$ to $\Spec A \backslash 0$ by $P_\pm^*$.

\begin{prop}
\label{prop_p3_bundle}
There exists a Severi-Brauer variety $P$ over $\Spec A \backslash 0$ such that $P_+^*$ and $P_-^*$ are fiber-wise embedded in $P$ via the Segre embedding $\mathbb{P}^1 \times \mathbb{P}^1 \xhookrightarrow{} \mathbb{P}^3$, and $P$ corresponds to the Brauer class represented by $\widetilde{Cl}$.
\end{prop}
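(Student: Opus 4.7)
The plan is to take $P$ to be the Severi--Brauer variety of the Azumaya algebra $\widetilde{Cl}$ over $\Spec A \backslash 0$. By Proposition~\ref{prop_azumaya_over_a}, $\widetilde{Cl}$ is Azumaya of rank $16$ (as the $A$-tensor product of pullbacks of the rank-$4$ Azumaya algebras $Cl_\pm$), so $P := \mathrm{SB}(\widetilde{Cl})$ is étale-locally a $\mathbb{P}^3$-bundle and by construction represents the Brauer class $[\widetilde{Cl}]$. This simultaneously handles existence of $P$ and the Brauer class claim, leaving only the Segre embedding to verify.

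For the embeddings of $P_\pm^*$, the first step is to identify each $P_\pm$ (as constructed in Propositions~\ref{prop_p1_bundle_normalization} and~\ref{prop_p1_bundle_blowup}) with $\mathrm{SB}(Cl_\pm)$ over $\Spec A_\pm \backslash 0$. Over the open locus where $q^\pm_u$ is nondegenerate this is the classical correspondence identifying a smooth conic with the Severi--Brauer variety of its even Clifford algebra; on the double cover the central element $d_\pm = y_\pm$ picks out one of the two components of $Cl(q^\pm_u)$, so the rank-$4$ algebra $Cl_\pm$ has Severi--Brauer a $\mathbb{P}^1$-bundle matching $\adj(C_\pm)$ generically. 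Over $\cone(E_\pm)$, the fiber of $Cl_\pm$ remains a $2\times 2$ matrix algebra (see the proof of Proposition~\ref{prop_azumaya_over_a}), and in the local diagonalized coordinates of Proposition~\ref{prop_p1_bundle_normalization} the resulting Severi--Brauer variety is naturally isomorphic to the normalization $P_\pm$.

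Pulling back along $\Spec A \backslash 0 \to \Spec A_\pm \backslash 0$ gives $P_\pm^* = \mathrm{SB}(Cl_\pm \otimes_{A_\pm} A)$, and the factorization
\[
\widetilde{Cl} = (Cl_+ \otimes_{A_+} A) \otimes_A (Cl_- \otimes_{A_-} A)
\]
then yields the Segre embedding via the standard construction for tensor products of Azumaya algebras. Étale-locally one writes $Cl_\pm \otimes_{A_\pm} A = \End(\mathcal{E}_\pm)$ for rank-$2$ modules $\mathcal{E}_\pm$, so $\widetilde{Cl} = \End(\mathcal{E}_+ \otimes_A \mathcal{E}_-)$, $P = \mathbb{P}(\mathcal{E}_+ \otimes_A \mathcal{E}_-)$, $P_\pm^* = \mathbb{P}(\mathcal{E}_\pm)$, and the Segre map $\mathbb{P}(\mathcal{E}_+) \times \mathbb{P}(\mathcal{E}_-) \hookrightarrow \mathbb{P}(\mathcal{E}_+ \otimes \mathcal{E}_-)$ glues. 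The line-bundle ambiguity in $\mathcal{E}_\pm$ is killed by the Segre embedding, so the construction descends to a global closed embedding of $P_+^* \times_{\Spec A \backslash 0} P_-^*$ into $P$ whose fibers are the usual Segre $\mathbb{P}^1 \times \mathbb{P}^1 \hookrightarrow \mathbb{P}^3$.

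The main obstacle is the identification of $P_\pm$ with $\mathrm{SB}(Cl_\pm)$ across the singular locus $\cone(E_\pm)$. Away from there it is classical, but near $\cone(E_\pm)$ we must match the explicit normalization/blowup resolution of the conic fibration to the abstract Severi--Brauer construction. The key technical ingredient is tracking the splitting of the Clifford module as $y_\pm \to 0$ through the local model $q^\pm = r_1(u)(x_1^\pm)^2 + r_2(u)(x_2^\pm)^2 + r_3(u)(x_3^\pm)^2$ with $r_3$ a local defining equation for $\cone(E_\pm)$, which is precisely the coordinate system already deployed in the proofs of Propositions~\ref{prop_p1_bundle_normalization} and~\ref{prop_p1_bundle_blowup}.
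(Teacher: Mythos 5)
Your approach differs from the paper's in a meaningful way, and the difference creates a gap that your proof does not close. The paper does \emph{not} define $P$ as $\mathrm{SB}(\widetilde{Cl})$. Instead it takes the twisted sheaves $\mathscr{P}_\pm$ whose projectivizations are (by definition of ``associated twisted sheaf'') exactly the already-constructed $\mathbb{P}^1$-bundles $P_\pm$, pulls these back, forms $\mathscr{P}_+^* \otimes \mathscr{P}_-^*$, and projectivizes to get $P'$. With that starting point, the fiberwise Segre embedding $P_+^* \times P_-^* \hookrightarrow P'$ is tautological, and the identification of the Brauer class $[P']$ with $[\widetilde{Cl}]$ only requires agreement of Brauer classes (which is checked over an open dense locus and extends by injectivity of the Brauer group for a regular integral base). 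The paper then pays a separate price to see that $P'$ is algebraic, which it resolves with the Hilbert-scheme argument parametrizing $(1,1)$-divisors. Your $P := \mathrm{SB}(\widetilde{Cl})$ is automatically algebraic, so you avoid the Hilbert-scheme step entirely; that part of your argument is clean and a genuine simplification.

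However, the trade-off is that your Segre-embedding step requires a strictly stronger input: a \emph{scheme-theoretic} isomorphism $P_\pm \cong \mathrm{SB}(Cl_\pm)$ over all of $\Spec A_\pm \backslash 0$, not merely agreement of Brauer classes. Proposition~\ref{prop_p1_bundle_clifford_module} establishes this only over the open locus $\Spec A_\pm \backslash \cone(E_\pm)$. Two smooth $\mathbb{P}^1$-bundles over a smooth base that agree on a dense open need not agree globally (projective bundles with the same Brauer class are classified up to a vector-bundle twist), so the extension across $\cone(E_\pm)$ is not automatic. You flag this as ``the main obstacle'' and point to the diagonalized local coordinates of Proposition~\ref{prop_p1_bundle_normalization}, but that proposition only shows $P_\pm$ is a smooth $\mathbb{P}^1$-bundle; it does not relate the fiber to the Clifford module, nor does it show the open-locus isomorphism $\mathbb{P}(M_\pm) \cong P_\pm$ extends over the ramification locus. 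Until that matching is carried out (e.g.\ by explicitly extending the $\mathrm{Ann}/\ker$ correspondence of Proposition~\ref{prop_p1_bundle_clifford_module} through the degeneration $y_\pm \to 0$ in the normalized coordinates), your proof of the Segre-embedding claim is incomplete, whereas the paper's tautological use of $\mathscr{P}_\pm$ sidesteps the issue.
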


\begin{proof}
We will exhibit two ways to construct the $\mathbb{P}^3$-bundle. First we can take the twisted sheaves $\mathscr{P}_\pm$ associated to $P_\pm$ \cite{caldararu2} (unique up to twisting of a line bundle). We can then pull back $\mathscr{P}_\pm$ from $\Spec A_\pm \backslash 0$ to $\Spec A \backslash 0$ to get $\mathscr{P}_+^*$, and their tensor product $\mathscr{P}_+^* \otimes \mathscr{P}_-^*$ is then a twisted sheaf on $\Spec A \backslash 0$ whose fiber is $\mathbb{C}^4$, hence its projectivization $P'$ becomes a $\mathbb{P}^3$-bundle. By \cite{caldararu2, kollar}, $P'$ corresponds to the Brauer class represented by $\widetilde{Cl} = Cl_+^* \otimes Cl_-^*$. From this description it is not obvious that we get an algebraic variety because we went through the \'{e}tale or analytic topology having twisted sheaves involved, but we observe that fiber-wise $P_+$ and $P_-$ are embedded in $P'$ via the Segre embedding $\mathbb{P}^1 \times \mathbb{P}^1 \xhookrightarrow{} \mathbb{P}^3$, which allows us to show that $P'$ is algebraic.

\medskip
To do that, we make use of the fact that both $P_\pm \to \Spec A_\pm \backslash 0$ are projective by construction, so are their pullbacks $P_\pm^*$ to $\Spec A \backslash 0$, and their fiber product $P_+^* \times P_-^*$ there. By definition we can embed it in a trivial $\mathbb{P}^N$-bundle over $\Spec A \backslash 0$. Now to every $(1,1)$-divisor in the fiber we can associate its image in $\mathbb{P}^N$, which is a closed subscheme in $\mathbb{P}^N$. This assignment gives a map from $P_+^* \times P_-^*$ to the Hilbert scheme $\Hilb (\mathbb{P}^N)$. Since there is a $\mathbb{P}^3$-family of $(1,1)$-divisors for each fiber, the closure of the image of $P_+^* \times P_-^*$ in $\Hilb (\mathbb{P}^N)$ will form an algebraic $\mathbb{P}^3$-bundle $P''$. We see that $P_+^* \times P_-^* \to P''$ is fiber-wise a Segre embedding so it must agree with $P'$, as both constructions are canonical. Hence we obtain the desired algebraic $\mathbb{P}^3$-bundle $P$, which is a Severi-Brauer variety.
\end{proof}

Next we will exhibit an explicit relation of the Brauer class of $\widetilde{Cl}$ and the $\mathbb{P}^3$-bundle $P$ when formulated via the quadric fibrations. We state here \cite[Theorem 1.3.5]{caldararu2} that serves as an equivalent definition of an Azumaya algebra:

\begin{thm}
\label{thm_azumaya_def}
Let $\mathscr{A}$ be an Azumaya algebra over $X$, and let $\alpha \in \Br'(X)$ be the element that $\mathscr{A}$ represents. Then there exists a locally free $\alpha$-twisted sheaf $\mathscr{E}$ of finite rank (not necessarily unique) such that $\mathscr{A}$ is isomorphic to the sheaf of endomorphism algebra of $\mathscr{E}$. Conversely, for any $\alpha \in \Br'(X)$ such that there exists a locally free $\alpha$-twisted sheaf of finite rank, the sheaf of endomorphism algebra of $\mathscr{E}$ is an Azumaya algebra whose class in $\Br'(X)$ is $\alpha$.
\end{thm}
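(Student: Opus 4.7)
The plan is to prove both directions by local trivialization together with careful \v{C}ech-cocycle bookkeeping on an \'{e}tale cover. First I would fix an \'{e}tale cover $\{U_i \to X\}$ fine enough that every object involved (the Azumaya algebra in one direction, the twisted sheaf in the other) is simultaneously trivialized; this is the common engine for both implications.

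For the forward direction, since $\mathscr{A}$ is Azumaya of some rank $n^2$, after refining the cover I may assume $\mathscr{A}|_{U_i} \cong \End(\mathscr{E}_i)$ for a locally free $\mathscr{O}_{U_i}$-module $\mathscr{E}_i$ of rank $n$. On each double overlap $U_{ij}$ the induced isomorphism $\End(\mathscr{E}_i) \cong \End(\mathscr{E}_j)$ is, by a local Skolem--Noether argument, inner: it is conjugation by some isomorphism $g_{ij} : \mathscr{E}_j|_{U_{ij}} \to \mathscr{E}_i|_{U_{ij}}$, uniquely determined up to a unit. Over triple overlaps the failure of $\{g_{ij}\}$ to satisfy the cocycle condition is a unit $\alpha_{ijk} \in \mathscr{O}_X^\times(U_{ijk})$, and this class equals the Brauer class of $\mathscr{A}$ under the usual inclusion $\Br(X) \hookrightarrow H^2(X, \mathscr{O}_X^\times)$. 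The data $(\{\mathscr{E}_i\}, \{g_{ij}\})$ is then, by definition, a locally free $\alpha$-twisted sheaf $\mathscr{E}$ with $\End(\mathscr{E}) \cong \mathscr{A}$. Conversely, given such an $\mathscr{E}$ with local pieces $\mathscr{E}_i$ and transitions satisfying $g_{ij} g_{jk} = \alpha_{ijk} g_{ik}$, the scalars $\alpha_{ijk}$ act oppositely on the two factors of $\End(\mathscr{E}_i) = \mathscr{E}_i \otimes \mathscr{E}_i^\vee$ and cancel, so the local endomorphism sheaves descend to a genuine $\mathscr{O}_X$-algebra sheaf $\End(\mathscr{E})$. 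This sheaf is \'{e}tale locally a matrix algebra of rank $n^2$ and hence Azumaya, and running the forward construction on it recovers the original twisted data, so its Brauer class is exactly $\alpha$.

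The main obstacle is the local Skolem--Noether step: ensuring that on a strictly Henselian local ring (or a small enough \'{e}tale chart) every algebra automorphism of $M_n(R)$ is inner, so that algebra-level isomorphisms between $\End(\mathscr{E}_i)$ and $\End(\mathscr{E}_j)$ genuinely lift to module-level isomorphisms between $\mathscr{E}_i$ and $\mathscr{E}_j$. This is classical over fields, but in the local-ring setting it uses that finitely generated projective modules over a strict Henselian local ring are free, together with the uniqueness-up-to-line-bundle-twist of a locally free module from its endomorphism algebra (the twist trivializing strict-Henselian-locally). Since the stated theorem is exactly C\v{a}ld\v{a}raru's Theorem 1.3.5, the simplest course is to cite it; a self-contained proof would just reproduce the cocycle-matching argument sketched above.
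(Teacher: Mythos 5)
The paper does not prove this statement: it is quoted directly as \cite[Theorem 1.3.5]{caldararu} and used as an equivalent characterization of Azumaya algebras, so there is no in-paper proof to compare against. Your sketch reproduces the standard \v{C}ech-cocycle argument underlying C\v{a}ld\v{a}raru's proof — trivialize $\mathscr{A}|_{U_i} \cong \End(\mathscr{E}_i)$ (harmless, since $M_n(\mathscr{O}_{U_i}) = \End(\mathscr{O}_{U_i}^{\oplus n})$), lift transition isomorphisms of the endomorphism sheaves to module isomorphisms $g_{ij}$ by Skolem--Noether over a strictly Henselian local ring, read off the obstruction cocycle $\alpha_{ijk}$, and for the converse observe that the scalar ambiguity cancels in $\mathscr{E}_i \otimes \mathscr{E}_i^\vee$ so $\End(\mathscr{E})$ glues — and it is essentially correct. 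You also correctly isolate the local Skolem--Noether theorem as the one non-formal input and correctly conclude that in context the appropriate move is simply to cite the reference, which is exactly what the authors do.
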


So it suffices to show the following.

\begin{prop}
\label{prop_p1_bundle_clifford_module}
The bundle
$P_+$ is Zariski locally isomorphic to the projectivization of an irreducible Clifford module on $\Spec A_+ \backslash \cone(E_+)$. Similar statement holds for $P_-$.
\end{prop}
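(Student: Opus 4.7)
The plan is to restrict to the open locus $U := \Spec A_+ \setminus \cone(E_+)$ where $q^+_u$ is nondegenerate and to produce an isomorphism between $P_+|_U$ and $\mathbb{P}(S_+)$ for an irreducible Clifford module $S_+$ chosen étale locally. I would first observe that over $U$ every fiber of $C_+ = \{q^+ = 0\}$ is a smooth conic in $\mathbb{P} V_+$, so the singular locus appearing in the proof of Proposition \ref{prop_p1_bundle_blowup} is empty and the normalization in the proof of Proposition \ref{prop_p1_bundle_normalization} is trivial; both constructions of $P_+$ therefore agree with $C_+|_U$, reducing the problem to identifying $C_+|_U$ with $\mathbb{P}(S_+)$.

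By Proposition \ref{prop_azumaya_over_a}, $Cl_+$ is Azumaya over $A_+$ on $U$, so étale locally on some $U' \to U$ we may trivialize $Cl_+|_{U'} \cong \End_{\mathcal{O}_{U'}}(S_+)$ for a rank-$2$ locally free sheaf $S_+$. To build the morphism $\Phi \colon C_+|_{U'} \to \mathbb{P}(S_+)$, I would combine the tautological inclusion $\mathcal{O}_{C_+}(-1) \hookrightarrow V_+ \otimes \mathcal{O}_{C_+}$ with the Clifford action $V_+ \otimes S_+ \to S_+$ to obtain a morphism of locally free sheaves $S_+|_{C_+}(-1) \to S_+|_{C_+}$. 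Fiberwise over $[v] \in C_+$ this map is left multiplication by $v$; because $q^+_u(v,v) = 0$ forces $v^2 = 0$ in $Cl_+$, it is a nilpotent endomorphism of a rank-$2$ module, hence of rank exactly $1$ (nonzero since $v \neq 0$ and $Cl_+$ acts faithfully on $S_+$). The kernel is therefore a line subbundle of $S_+|_{C_+}$, and I would take $\Phi$ to be the classifying morphism to $\mathbb{P}(S_+)$.

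The key remaining step, which I expect to be the main content of the argument, is to check that $\Phi$ is an isomorphism. Since both sides are smooth $\mathbb{P}^1$-bundles over $U'$, it suffices to exhibit a fiberwise inverse. Given nonzero $s$ in a fiber of $S_+$, I would consider the $\mathbb{C}$-linear map $V_+ \to S_+$ defined by $v \mapsto v \cdot s$; its kernel consists of isotropic vectors, since $v \cdot s = 0$ implies $q^+_u(v,v) \cdot s = v^2 \cdot s = 0$. This kernel has dimension exactly $1$: it cannot be larger, because a $2$-dimensional kernel would be a totally isotropic subspace of $V_+$, which is impossible for a nondegenerate quadratic form on a $3$-dimensional space. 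Sending $[s]$ to this isotropic line is inverse to $\Phi$ fiberwise, so $\Phi$ is a fiberwise bijection of smooth $\mathbb{P}^1$-bundles and therefore an isomorphism on $U'$, yielding the desired local isomorphism $P_+ \cong \mathbb{P}(S_+)$ over $\Spec A_+ \setminus \cone(E_+)$. The identical construction using $V_-$, $q^-$, and $Cl_-$ gives the analogous statement for $P_-$.
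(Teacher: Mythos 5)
Your proposal is correct and follows essentially the same strategy as the paper: produce, over the locus where $q^+_u$ is nondegenerate, a fiberwise bijection between the conic $\{q^+_u=0\}$ and the projectivization $\mathbb{P}(S_+)$ of a rank-$2$ Clifford module by sending a point $[v]$ of the conic to $\ker(v\cdot\,) \subseteq S_+$ and, inversely, a line $[s] \subseteq S_+$ to the isotropic annihilator $\{v : v\cdot s = 0\}$. The dimension counts ($\ker(v\cdot)$ has rank exactly $1$ because $v^2=0$ makes it nilpotent and the representation is faithful; $\Ann(s)$ is a totally isotropic subspace of the nondegenerate $3$-dimensional $V_+$, so has dimension exactly $1$) are the same as, and in fact stated a bit more cleanly than, the argument in the paper.

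A few remarks on points where you are slightly more careful than the paper, and one where you elide a step. You correctly observe that the normalization and blow-up constructions of $P_+$ become trivial away from $\cone(E_+)$; but strictly speaking $P_+$ is the normalization of $\adj(C_+)$, which lives in $\mathbb{P}V_+^\vee$, so the precise statement is that over $U$ the dual-conic (Gauss) map $C_+|_U \to \adj(C_+)|_U = P_+|_U$ is an isomorphism — the paper silently makes the same identification when it compares $\mathbb{P}M$ with $\{q^+=0\} \subseteq \mathbb{P}V_+$. You also do well to invoke the proof of Proposition \ref{prop_azumaya_over_a} to get the \'{e}tale-local splitting $Cl_+ \cong \End(S_+)$, which justifies the existence of the module $M$ that the paper takes for granted. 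Your closing step — that a fiberwise bijective morphism between two smooth $\mathbb{P}^1$-bundles over a smooth base is an isomorphism — is sound (it is a finite birational morphism between normal varieties), and is no less rigorous than the paper's ``it is then evident.''
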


\begin{proof}
Let $M$ be an irreducible $Cl_+$-module of rank $2$, when $Cl_+$ is considered a sheaf of algebras on $\Spec A_+ \backslash \cone(E_+)$. We want to get an isomorphism $\mathbb{P}M \to P_+$. Fix $m \neq 0$ of $M$ and consider $\mathbb{C}m \in \mathbb{P}M$. The map $V_+ \to M$ sending $v \mapsto v \cdot m$ has a kernel by counting dimension. The annihilator of $m$ consists of isotropic vectors because $0 = vvm = q^+(v,v)m$ and $m \neq 0$. So for $q^+$ of full rank, the annihilator must be $1$-dimensional because $\{q^+=0\}$ doesn't contain any line in $\mathbb{P}V_+$. This gives a natural map $\mathbb{P}M \to \{q^+=0\} \subseteq \mathbb{P}V_+$ sending $\mathbb{C}m \mapsto \Ann(m)$. For the inverse, send $\mathbb{C}v \in \{q^+ = 0\} \subseteq \mathbb{P}V_+$ to
\[
\ker(v) = \{m \in M : v \cdot m = 0\}.
\]
We need to make sure $\ker(v)$ is always $1$-dimensional to get an element in $\mathbb{P}M$. Since $q^+$ is assumed of full rank, $q^+(v,v) \neq 0$. If $\ker(v) = M$, take $v' \neq v$ in $\{q^+ = 0\} \subseteq \mathbb{P}V_+$ and $m' \in M$ linearly independent with $m$ such that $v'm' = 0$, then $\Ann(m')$ contains both $v$ and $v'$, a contradiction to $\dim \Ann(m') = 1$. Hence we have a map $\{q^+ = 0\} \to \mathbb{P}M$ sending $\mathbb{C}v \mapsto \ker(v)$. It is then evident that the two maps are inverses to each others.
%{\color{yellow} ****** It is surjective because for $v \in V_+$ with $q^+(v,v) = 0$ and $\mathbb{C}m \in \mathbb{P}M$, either $v$ annihilates $m$ or $vm$. It is injective because if $vm_1 = vm_2 = 0$, one can find $v'$ such that $q^+(v,v') \meq 0$. Then $v'm_1, v'm_2 \neq 0$ and $v'vm_1 = -2q^+(v,v')m_1 -vv'm_1$ and we claim that $vv'm_1 = 0$. If not, we have $v'm_1 \in \mathbb{C}m_1$ (no?) which means $\mathbb{C}m_1$ is a submodule of $M$, a contradiction.}
\end{proof}

Over the points $u \in W \backslash \cone(E_+ \cup E_-)$, we are in a similar situation as in Kuznetsov's case for which the Clifford algebras in the fiber are of full rank, so we can make comparison of the $\mathbb{P}^3$-bundles. First we give a description to the $\mathbb{P}^3$-bundle in Kuznetsov's case: when $q$ is a quadratic form in $\mathbb{P}V $ of full rank, in the quadric $\{q = 0\} \subseteq (W\backslash 0) \times \mathbb{P}V$ there are two families of rulings, parametrized by $\mathbb{P}^3$, of isotropic subspaces isomorphic to $\mathbb{P}^2$'s. These two $\mathbb{P}^3$ families naturally form two $\mathbb{P}^3$-bundles $P'$ and $P''$ over $W \backslash \cone(E_+ \cup E_-)$. Note that $P'$ is isomorphic to $P''$.
%In the setting of this paper, we also have the decomposition $V = V_+ \oplus V_-$.

\begin{prop}
\label{prop_kuznetsov_clifford_module}
$P'$ and $P''$ are isomorphic to the projectivizations of the two irreducible modules of $Cl_{ev}$ over $W \backslash \cone(E_+ \cup E_-)$.
\end{prop}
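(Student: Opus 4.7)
The plan is to parallel the construction of Proposition \ref{prop_p1_bundle_clifford_module}, now applied to the even Clifford algebra of the rank six form $q$ rather than the full Clifford algebra of a rank three form. Over $u \in W \setminus \cone(E_+ \cup E_-)$ the fiber $q_u$ is non-degenerate, so $Cl_{ev}(q_u)$ splits as a product of two rank four matrix algebras, with irreducible modules $S^+$ and $S^-$ of dimension four (the half-spinor representations). Odd Clifford multiplication gives linear maps $V \otimes S^\pm \to S^\mp$, which will power an annihilator-of-a-spinor construction analogous to $\mathbb{C}m \mapsto \Ann(m)$ from the rank three case.

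Concretely, I would proceed in three steps. First, for a non-zero spinor $s \in S^\pm$, I would define
\[
\Ann_V(s) := \{v \in V : v \cdot s = 0\} \subseteq V
\]
and check from the Clifford relation $v \cdot v = q_u(v,v)$ and its polarization $(v_1+v_2)\cdot(v_1+v_2)\cdot s = 0$ that $\Ann_V(s)$ is isotropic for $q_u$. Second, I would invoke the classical fact that in dimension six every non-zero half-spinor is pure, so that $\Ann_V(s)$ is a maximal isotropic three-plane in $V$, and the assignment $\mathbb{C}s \mapsto \mathbb{P}\Ann_V(s)$ is a bijection between $\mathbb{P}S^\pm$ and one of the two families of rulings of $\{q_u = 0\}$. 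Under the exceptional isomorphism $\mathrm{Spin}(6) \cong SU(4)$ the modules $\mathbb{P}S^\pm \cong \mathbb{P}^3$ are identified with the projective spaces of lines and hyperplanes of $\mathbb{C}^4$, matching the two families $P'$ and $P''$. Third, I would exhibit the inverse $L \mapsto \{s \in S^\pm : L \cdot s = 0\}$, which is one-dimensional by the same spinor dimension count, and conclude that the two fiberwise bijective morphisms $\mathbb{P}S^+ \to P'$ and $\mathbb{P}S^- \to P''$ (up to labeling of the two families) are isomorphisms, since a fiberwise bijective morphism of smooth $\mathbb{P}^3$-bundles over a smooth base is an isomorphism.

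The main obstacle, in contrast with the rank three situation treated in Proposition \ref{prop_p1_bundle_clifford_module}, is that the dimension count is no longer elementary: in rank six the annihilator of a pure spinor has dimension three, so the map $V \to S^\mp$, $v \mapsto v \cdot s$, is in fact not surjective and the dimension of $\Ann_V(s)$ cannot be deduced purely formally from the irreducibility of $S^\mp$ over $Cl_{ev}$. One must instead import the genuinely spin-theoretic input that every half-spinor in dimension six is pure, and that pure spinors parametrize one entire family of maximal isotropic three-planes. Once this is granted, the rest is canonical: $\Ann_V(s)$ depends algebraically on $s$ and on $u$, so the three steps assemble into a morphism of $\mathbb{P}^3$-bundles over $W \setminus \cone(E_+ \cup E_-)$, giving the desired global isomorphism.
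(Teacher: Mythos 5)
Your proof is correct, and it reaches the same objects as the paper (the annihilator map $\mathbb{C}s \mapsto \Ann_V(s)$, the isotropy check $q(v,v)s = v\cdot v\cdot s = 0$, the inverse $L \mapsto \ker(L)$, and the conclusion that a fiberwise bijective morphism of $\mathbb{P}^3$-bundles is an isomorphism), but the route through the key technical step is genuinely different. The paper establishes $\dim \Ann(m) = 3$ by brute force: it chooses a hyperbolic decomposition $V = U \oplus U^\vee$, builds the modules as $(\wedge U)_{ev}$ and $(\wedge U)_{odd}$, writes down the explicit $6\times 4$ matrix of $v \mapsto v\cdot m$ for $m = a_0 + a_1 v_2\wedge v_3 + a_2 v_3\wedge v_1 + a_3 v_1\wedge v_2$, and verifies rank $3$ by exhibiting $3\times 3$ minors equal to $a_i^3$ together with the single column relation $a_1 C_1 + a_2 C_2 + a_3 C_3 - a_0 C_4 = 0$; the inverse is likewise identified concretely as $\wedge^3 V' \subseteq (\wedge V')_{odd}$. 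You instead import the low-dimensional exceptional fact that in dimension six every nonzero half-spinor is pure (equivalently, the spinor variety of $\mathfrak{so}(6) \cong \mathfrak{sl}(4)$ is all of $\mathbb{P}^3$), which directly gives both that $\Ann_V(s)$ is a maximal isotropic $3$-plane and that the parametrization by chirality matches one family of rulings. The tradeoff is clear: the paper's computation is self-contained and elementary but opaque; your argument is conceptually transparent and explains why the dimension count works (unlike the rank-$3$ case, where it is automatic), at the cost of invoking a nontrivial external input about purity. You correctly flag that the dimension count is the genuine content here, which the paper handles computationally and you handle by citation. The only minor caveat is that your inverse step ("one-dimensional by the same spinor dimension count") is stated a little briskly — it is exactly the uniqueness-up-to-scalar of the pure spinor attached to a maximal isotropic, which is part of the same purity package, but it would be worth saying so explicitly rather than gesturing at a dimension count.
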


\begin{proof}
%Let $M_0$ and $M_1$ be the irreducible modules of $Cl_{ev}$ of rank $4$. Then $M = M_0 \oplus M_1$ is the irreducible module for $Cl$. Each $v \in V \subseteq Cl$ gives rise to two maps $M_0 \to M_1$ and $M_1 \to M_0$ via the Clifford action as follows. When restricted to $u \in W\backslash \cone(E_+ \cup E_-)$, let $V_u$ be $V$ equipped with the full rank quadratic form $q_u$, one can decompose $V_u = V'_u \oplus V''_u$ where into $2$ isotropic subspaces of dimension $3$, then the Clifford algebra $Cl(V_u)$ acts on $\wedge V'_u$ where $v \in V'_u$ acts by wedging and $v \in V''_u$ acts by contraction. The irreducible modules are then isomorphic to $(\wedge V'_u)_{ev} \simeq M_0$ and $(\wedge V'_u)_{odd} \simeq M_1$, and so we obtain the maps $M_0 \to M_1$ and $M_1 \to M_0$ above.\\
When restricted to $u \in W\backslash \cone(E_+ \cup E_-)$, the rank of the quadratic form is $6$, the Clifford algebra $Cl_{ev}$ has two irreducible modules $M_0$ and $M_1$ of rank $4$, and $M = M_0 \oplus M_1$ is the irreducible module for $Cl$. The embedding $V \subseteq Cl$ gives rise to two maps $V \otimes M_0 \to M_1$ and $V \otimes M_1 \to M_0$ via the Clifford action of $Cl$ on $M$.

\medskip
Fix $m \neq 0$ of $M_0$ and consider $\mathbb{C}m \in \mathbb{P}M_0$. The map $V \to M_1$ sending $v \mapsto v \cdot m$ has a matrix of rank $3$ and hence a kernel $\Ann(m)$ of dimension $3$. To see this, we may identify the quadratic form with a natural pairing on $U \oplus U^\vee$ where $U = \mathbb{C}^3$, then the Clifford algebra and modules are built from wedge products and contractions \cite{deligne}. Fix bases $\{v_1, v_2, v_3\}$ for $U$, $\{v_4, v_5, v_6\}$ for $U^\vee$ and $\{v_1, v_2, v_3, v_1 \wedge v_2 \wedge v_3\}$ for $M_0$. Let $m = a_0 + a_1 v_2 \wedge v_3 + a_2 v_3 \wedge v_1 + a_3 v_1 \wedge v_2$. Then the matrix of $m$ is (suppressing zero entries)
\[
\begin{bmatrix}
a_0 & \ & \ & a_1 \\
\ & a_0 & \ & a_2 \\
\ & \ & a_0 & a_3 \\
\ & a_3 & -a_2 & \ \\
-a_3 & \ & a_1 & \ \\
a_2 & -a_1 & \ & \
\end{bmatrix}
\]
which contains $3 \times 3$ minors of the form $a_i^3$ so the rank is at least $3$, and the columns $C_i$'s satisfy the relation $a_1 C_1 + a_2 C_2 + a_3 C_3 - a_0 C_4 = 0$.

\medskip
The kernel $\Ann(m)$ is an isotropic subspace with respect to $q$ because $0 = vvm = q(v,v)m$ implies $q(v,v) = 0$. From here we obtain a map from $\mathbb{P}M_0 \to P' \cup P''$ sending $\mathbb{C}m \mapsto \Ann(m)$. By symmetry, there is also a map from $\mathbb{P}M_1 \to P' \cup P''$ sending $\mathbb{C}m \mapsto \Ann(m)$.

\medskip
It follows from the continuity of the map that two $m, m'$ from the same $M_i$ will be sent to the same ruling, i.e. to the same $P'$ or $P''$.
For the inverse, and send an isotropic subspace $\mathbb{P}V' \cong \mathbb{P}^2$ in $\mathbb{P}V$ to
\[
%\ker(H) = \{m \in M_0 : H \cdot m = 0 \text{ in } M_1 \}.
\ker(V') = \{m \in M : V' \cdot m = 0 \text{ in } M \}.
\]
Pick a complementary isotropic subspace $\mathbb{P}V'' \subseteq \mathbb{P}V$ from the other ruling of the quadric, then $V = V' \oplus V''$. We can then identify $M_0 = (\wedge V'')_{ev} = (\wedge V')_{odd}$, $M_1 = (\wedge V'')_{odd} = (\wedge V')_{ev}$ (note that we cannot distinguish $M_0$ and $M_1$ and the codomain of this map depends on the choice of the roles of $V'$ and $V''$), and from this description we see that $\ker(V') = \wedge^3 V' \subseteq M_0$ has dimension $1$, hence we get an element in $\mathbb{P}M_0$. This gives a map $P' \cup P'' \to \mathbb{P}M_0$. It is then evident that the two maps are inverses to each others.

\medskip
Again from continuity, two $V', V''$ from the same ruling will both be sent to the same one of $\mathbb{P}M_0$ or $\mathbb{P}M_1$.
\end{proof}

%Let $C'_\pm$ denote the quadric fibration $\{q_u^\pm = 0\} \subseteq \mathbb{P}V_\pm$ over $W \backslash \cone(E_+ \cup E_-)$.
%Consider the ramified double cover $\Spec B \to W$ and its restriction $Y' \to W \backslash \cone(E_+ \cup E_-)$ which is unramified.
Let $Y' \to W \backslash \cone(E_+ \cup E_-)$ be the restriction of the ramified double cover $\Spec B \to W$. Then $Y'$ is unramified and we have the following:

\begin{prop}
$P'$ and $P''$ can be combined into one $\mathbb{P}^3$-bundle $P_d$ over $Y'$.
\end{prop}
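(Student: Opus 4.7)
The plan is to realize $P_d$ as the projectivization of an irreducible Clifford module living naturally on $Y'$. The bridge between $Y'$ and the Clifford algebra is supplied by the center of $Cl_{ev}$: since $\dim V = 6$ is even, the pseudoscalar $\omega := v_1^+ v_2^+ v_3^+ v_1^- v_2^- v_3^-$ lies in $Z(Cl_{ev})$ (though not in $Z(Cl)$), and a direct computation shows $\omega^2 = c \cdot \det(q)$ for a nonzero constant $c$. Since the matrix of $q = q^+ + q^-$ is block-diagonal, $\det(q) = \det(q^+) \det(q^-) = f_+ f_-$, so the restriction $Z(Cl_{ev})|_U \cong \mathbb{C}[u][\omega]/(\omega^2 - c f_+ f_-)$ is identified with the coordinate ring of $Y'|_U$ after rescaling $y \mapsto \omega / \sqrt{c}$.

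Once this identification is in place, the algebra $Cl_{ev}$ on $U$ descends, via the standard center-algebra equivalence, to a sheaf of Azumaya algebras of rank $16$ on $Y'$, which therefore admits \'{e}tale-locally a unique irreducible module $M_d$ of rank $4$. I would set $P_d := \mathbb{P}(M_d)$; this is manifestly a $\mathbb{P}^3$-bundle on $Y'$. Compatibility with Proposition \ref{prop_kuznetsov_clifford_module} is then automatic: \'{e}tale-locally on $U$ where $\pi : Y' \to U$ splits into two sheets, pulling $M_d$ back along each section yields the two eigenspaces of $\omega$ acting on the corresponding $Cl_{ev}$-module on $U$; these eigenspaces are precisely the irreducible modules $M_0$ and $M_1$ from that proposition, so fiber-wise $P_d$ matches $P'$ on one sheet and $P''$ on the other, and $\pi_* P_d \cong P' \sqcup P''$.

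The main technical hurdle is thus the Clifford-algebraic input: the centrality of $\omega$ in $Cl_{ev}$ and the identity $\omega^2 = c f_+ f_-$. Centrality follows from the standard observation that in even dimension the top element anti-commutes with odd-degree generators (hence commutes with even-degree products), and $\omega^2$ reduces to $\pm \det(q)$ by repeated application of the Clifford relations $v_i^\pm v_j^\pm + v_j^\pm v_i^\pm = -2 q_u(v_i^\pm, v_j^\pm)$ and $v_i^+ v_j^- = -v_j^- v_i^+$. The identification $\det(q) = f_+ f_-$ is then immediate from the block form. Once these identities are in hand, the construction of $P_d$ and its compatibility with $P', P''$ are purely formal.
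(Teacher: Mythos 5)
Your argument is correct and rests on the same underlying observation as the paper's: the two irreducible $Cl_{ev}$-modules over $U := W\backslash\cone(E_+\cup E_-)$ are distinguished by the eigenvalue of the central element, whose square is a unit times $\det q = f_+f_-$, so that the moduli of irreducible modules is exactly the double cover $Y'$. Where you differ is in packaging. The paper constructs $P_d$ fiber-by-fiber: it first fixes (\'{e}tale-locally) a labelling of the two roots of $y^2 - f_+f_-$, declares the fiber of $P_d$ over $(u,y_1)$ to be that of $P'$ and over $(u,y_2)$ to be that of $P''$, and then checks this glues into an \'{e}tale $\mathbb{P}^3$-bundle. You instead identify $Z(Cl_{ev})$ with the coordinate ring of $Y'$ directly, observe that $Cl_{ev}$ is then a rank-$16$ Azumaya algebra over $Y'$, and take $P_d$ to be the associated Brauer-Severi variety; the matching with $P'$ and $P''$ on the two sheets is then read off from the eigenspace decomposition of the central element on the pullback. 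Your route is more intrinsic -- it produces $P_d$ in one step without any choice of labelling and makes the \'{e}tale-local triviality automatic -- at the cost of requiring the explicit center computation ($\omega$ is central in $Cl_{ev}$ and $\omega^2 = c\,\det q$); with the paper's sign conventions one in fact gets $c = -1$ for the pseudoscalar $\omega = v_1^+\cdots v_3^-$ of $Cl_{ev}$ (whereas the element $d_+\otimes 1 \cdot 1\otimes d_-$ in $\widetilde{Cl}_{ev}$ squares to $+f_+f_-$), so the rescaling $y\mapsto \omega/\sqrt{c}$ you flag is indeed needed, but harmless.
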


\begin{proof}
The two irreducible modules of $Cl_{ev}$ are classified by the action of the central element, which can be one of the two roots $y_1$, $y_2$ of $y^2 - \det(q) = y^2 - f_+ f_-$. Once we made a choice of correspondence between $P'$, $P''$ and the two modules as in Proposition \ref{prop_kuznetsov_clifford_module}, say $P' \xleftrightarrow{} y_1$ and $P'' \xleftrightarrow{} y_2$, we can define the $\mathbb{P}^3$-bundle $P_d$ for which the fiber over $(u, y_1) \in \Spec B$ is the fiber of $P'$, and the fiber over $(u, y_2)$ is the fiber of $P''$. It is well-defined as we can choose a fixed square root over an \'{e}tale neighborhood, showing that it is a $\mathbb{P}^3$-bundle in the \'{e}tale topology.
\end{proof}

Let $C'_\pm$ denote the quadric fibration $\{q_u^\pm = 0\} \subseteq \mathbb{P}V_\pm$ over the unramified double cover $Y'$.

\begin{prop}
\label{prop_p3_bundle_isomorphic}
There exist embedding of $C'_+ \times_{Y'} C'_-$ into $P_d$ such that fiber-wise it is the Segre embedding $\mathbb{P}^1 \times \mathbb{P}^1 \xhookrightarrow{} \mathbb{P}^3$.
\end{prop}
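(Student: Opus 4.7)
The plan is to construct the embedding $\Phi: C'_+ \times_{Y'} C'_- \to P_d$ fiber-wise by sending $(y, v_+, v_-)$ to the unique $3$-dimensional isotropic subspace of $(V, q_u)$ extending $\langle v_+, v_-\rangle$ in the ruling selected by $y$, and then to realize $\Phi$ as a composition of algebraic morphisms between relative isotropic Grassmannians. The key pointwise check is that $\langle v_+, v_-\rangle$ is a $2$-dimensional isotropic subspace of $V$: since $V = V_+ \oplus V_-$ is orthogonal for $q_u = q^+_u + q^-_u$, we have $q_u(a v_+ + b v_-) = a^2 q^+_u(v_+) + b^2 q^-_u(v_-) = 0$. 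The quotient $\langle v_+, v_-\rangle^\perp / \langle v_+, v_-\rangle$ is $2$-dimensional and carries a non-degenerate induced form (since $q_u$ is full rank over $W \backslash \cone(E_+ \cup E_-)$), so it has exactly two isotropic lines, producing the two $3$-dimensional isotropic extensions of $\langle v_+, v_-\rangle$, one in each ruling $P'$ or $P''$ of $\{q_u = 0\}$ by a standard intersection-dimension parity argument; the choice $y \in Y'$ selects one of these rulings by the definition of $P_d$.

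To verify the Segre property fiber-wise, I fix $v_+$ and let $v_-$ vary over $C_-(u) \cong \mathbb{P}^1$; the image $L$ traverses the family of max isotropics in the chosen ruling containing $v_+$, which is a $\mathbb{P}^1$ in $P_d(y) \cong \mathbb{P}^3$ (it corresponds to one of the two rulings of the $4$-dimensional smooth quadric $\langle v_+\rangle^\perp / \langle v_+\rangle$). Injectivity holds because two distinct isotropic lines in $V_-$ cannot be contained in a single isotropic subspace of $V_-$, since $q^-_u$ has maximal isotropic dimension $1$ in the $3$-dimensional $V_-$. By symmetry, fixing $v_-$ and varying $v_+$ produces the transverse family of $\mathbb{P}^1$'s in the image. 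A map $\mathbb{P}^1 \times \mathbb{P}^1 \to \mathbb{P}^3$ whose two factors sweep out the two rulings of a smooth quadric surface is necessarily the Segre embedding.

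For algebraicity, I would factor $\Phi$ through relative isotropic Grassmannians over $Y'$:
\[
C'_+ \times_{Y'} C'_- \longrightarrow \operatorname{IGr}(2, V)|_{Y'} \longrightarrow P_d,
\]
where the first morphism is $(v_+, v_-) \mapsto \langle v_+, v_-\rangle$ and the second is the algebraic ``unique extension in the chosen ruling'' map (a $\mathbb{P}^2$-bundle) which is well-defined after pullback to $Y'$ because $Y'$ trivializes the double cover classifying the components of $\operatorname{IGr}(3, V) \to W$. The main obstacle will be to verify that the labeling of rulings used in the definition of $P_d$ (via the central element $y$ of $Cl_{ev}$ identifying $P'$ versus $P''$) matches the labeling that arises from trivializing the components of $\operatorname{IGr}(3, V)$ over $Y'$. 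This should reduce to identifying the square root $y$ of $f_+ f_- = \det(q)$ on $Y'$ with the product $d_+ d_-$ of the central elements from Proposition \ref{prop_clifford_center}, combined with the correspondence between rulings and irreducible $Cl_{ev}$-modules established in Proposition \ref{prop_kuznetsov_clifford_module}.
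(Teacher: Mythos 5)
Your construction of the map agrees with the paper's — the isotropy of $\langle p_+,p_-\rangle$ follows from $V_+\perp V_-$, and $(u,y)\in Y'$ selects one of the two maximal isotropic extensions — but the verification that the map is fiber-wise Segre is where you diverge, and your argument has a gap. The paper picks the normal form $q_u = x_1x_2 + x_3^2 - x_4^2 + x_5x_6$, identifies $\{q_u=0\}$ with the Pl\"{u}cker quadric of $G(2,4)$, parametrizes $p_\pm$ by $(a_0:a_1)$ and $(a_2:a_3)$, and computes the image in $\mathbb{P}^3$ to be $(a_0a_2:a_0a_3:a_1a_3:a_1a_2)$, which is visibly the Segre formula. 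You instead establish that for fixed $v_+$ the map $v_-\mapsto U(v_+,v_-)$ is injective into a $\mathbb{P}^1$ of maximal isotropics, and symmetrically, and then appeal to the principle that a map $\mathbb{P}^1\times\mathbb{P}^1\to\mathbb{P}^3$ sweeping out the two rulings of a smooth quadric surface is the Segre embedding. But you do not verify the hypothesis of that principle. You need additionally: (i) that the $\mathbb{P}^1$ of maximal isotropics through a fixed $v_+$ in the chosen ruling sits as a \emph{line} in $P_d(y)\cong\mathbb{P}^3$, not merely as an abstract $\mathbb{P}^1$ — true, being a Schubert line of the spinor variety $\operatorname{OG}(3,6)_+\cong\mathbb{P}^3$, but that identification is essentially the computation the paper does in coordinates; and (ii) that the resulting image is actually a smooth quadric surface and not a degenerate configuration such as a plane or a cone of lines through a point. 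Point (ii) does hold — the lines $L_{v_+}$ and $L_{v_+'}$ are pairwise disjoint for $v_+\neq v_+'$, since a common maximal isotropic would contain two distinct isotropic lines of $V_+$, contradicting that $q^+_u$ has Witt index $1$; three pairwise disjoint lines determine a unique smooth quadric, and each $L_{v_-}$, meeting all three, must then lie on it, hence so do all the $L_{v_+}$ — but as written the proposal asserts the characterization without establishing its antecedent.

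Your concern about reconciling the ruling labeling coming from $Cl_{ev}$-modules with a labeling coming from $\operatorname{IGr}(3,V)$ is not a genuine obstacle and is not addressed by the paper either: $P_d$ is \emph{defined} in the preceding proposition by fixing once and for all a correspondence between $\{P',P''\}$ and the two modules, and the map $C'_+\times_{Y'}C'_-\to P_d$ merely picks the unique maximal isotropic through $\langle p_+,p_-\rangle$ lying in the fiber $P_d|_{(u,y)}$, whatever ruling that fiber happens to be. No independent re-derivation or compatibility check of labelings is required.
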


\begin{proof}
For $(u, y) \in Y'$, $p_+ \in \{q_u^+ = 0\} \subseteq \mathbb{P}V_+$ and $p_+ \in \{q_u^+ = 0\} \subseteq \mathbb{P}V_-$, the line $\ell$ joining $p_+$ and $p_-$ in $\mathbb{P}V$ is isotropic with respect to $q_u$:
\[
    q_u(ap_+ + bp_-, ap_+ + bp_-) = q_u(ap_+, ap_+) + 2 q_u(ap_+, bp_-) + q_u(bp_-, bp_-) = 0.
\]
The line $\ell$ is contained in the two isotropic subspaces of dimension $2$ with respect to $q_u$. From here we obtain a globally defined map $C'_+ \times_{Y'} C'_- \to P_d$ by sending $\ell$ to the point in the fiber of $P_d$ over $(u, y)$ which corresponds to the isotropic subspace that contains $\ell$. Now we show that \'{e}tale locally they are Segre embeddings.

\medskip
Fix an $u$. We can model the full rank quadric as $q_u = x_1 x_2 + x_3^2 - x_4^2 + x_5 x_6$ where $V_+$ has coordinates $x_1, x_2, x_3$ and $V_-$ has coordinates $x_4, x_5, x_6$. By the change of variables
\[
x_1 = z_{12},\quad x_2 = z_{34},\quad x_3 = \frac{z_{13} - z_{24}}{2},\quad x_4 = \frac{z_{13} + z_{24}}{2},\quad x_5 = z_{14},\quad x_6 = z_{23},
\]
we can identify the quadric as the image $z_{12} z_{34} - z_{13} z_{24} + z_{14} z_{23} = 0$ of the Pl\"{u}cker embedding $G(2,4) \xhookrightarrow{} \wedge^2 \mathbb{C}^4 \simeq \mathbb{C}^6$ where we use the basis $\{e_{ij} = e_i \wedge e_j : i < j\}$. For the Pl\"{u}cker embedding we can parametrize one $\mathbb{P}^3$-family of isotropic subspaces of dimension $2$ by
\[
(y_1 : y_2 : y_3 : y_4) \in \mathbb{P}^3 \to \mathbb{P} \Span \left\{ \sum_{i=1}^4 y_i e_{ij} : j = 1,\dots ,4 \right\}.
\]
Points $p_+ \in \{q_u^+ = x_1 x_2 + x_3^2 = 0\}$ can be parametrized by $(a_0 : a_1) \in \mathbb{P}^1$ as $p_+ = (a_0^2 : -a_1^2 : a_0 a_1 : 0 : 0 : 0)$. Similarly we let $p_- = (0 : 0 : 0 : a_2 a_3 : a_2^2 : a_3^2)$ for $(a_2 : a_3) \in \mathbb{P}^1$. Switching back to the coordinates $z_{ij}$'s, we have $p_+ = a_0^2 e_{12} - a_1^2 e_{34} + a_0 a_1 e_{13} - a_0 a_1 e_{24}$ and $p_- = a_2 a_3 e_{13} + a_2 a_3 e_{24} + a_2^2 e_{14} + a_3^2 e_{23}$. They come from the following matrices in $G(2,4)$:
\[
\begin{pmatrix}
a_0 & 0 & 0 & a_1 \\
0 & a_0 & a_1 & 0
\end{pmatrix}
\quad \text{and} \quad
\begin{pmatrix}
a_2 & a_3 & 0 & 0 \\
0 & 0 & a_3 & a_2
\end{pmatrix}.
\]
We are then looking for $(y_1 : y_2 : y_3 : y_4)$ whose associated isotropic subspace contains both $p_+$ and $p_-$. In $\mathbb{C}^4$, $p_+$ and $p_-$ intersect at the line spanned by $(a_0 a_2 : a_0 a_3 : a_1 a_3 : a_1 a_2)$, so they are contained in a $3$-dimensional subspace, which corresponds to an isotropic $2$-dimensional subspace in $\{q_u = 0\} \subseteq \mathbb{P}V$. It can be checked that the subspace corresponding to $(y_1 : y_2 : y_3 : y_4) = (a_0 a_2 : a_0 a_3 : a_1 a_3 : a_1 a_2)$ contains $p_+$ and $p_-$. This formula clearly is the Segre embedding. 
\end{proof}

The same is true after pulling $C'_+ \times_{Y'} C'_-$ and $P_d$ back to $\Spec A \backslash \cone(E_+ \cup E_-)$. Since the constructions in Proposition \ref{prop_p3_bundle} and \ref{prop_p3_bundle_isomorphic} are both canonical, we see that the restrictions of $P$ in Proposition \ref{prop_p3_bundle} to $\Spec A \backslash \cone(E_+ \cup E_-)$ and the pullback $P_d^*$ can be identified. This established the relation between Kuznetsov's construction and our construction outside of the ramification locus $\cone(E_+ \cup E_-)$.

%\begin{cor}
%{\color{red} tensor product = product of Brauer classes}
%\end{cor}

\section{Generalizations and Future Work}

We can ask whether some conditions on the root stacks and the corresponding Brauer class ensure certain properties on the Enriques surfaces, and vice versa:
\begin{enumerate}[(i)]
    \item If we look at the blowup of $Y$ instead of its stacky desingularization, then the analog of the quotient $\widehat{\mathscr Y}$ appears to be the root stack of
    the rational elliptic surface at two smooth fibers. How is it related to the construction of Enriques surfaces via logarithmic transformation along these fibers, in particular how does the Brauer class come into play?
    \item When is the Enriques surface nodal, i.e. contains a nonsingular rational curve?
    \item How do the elements of the Picard group of the Enriques surface corresponds to (complexes of) sheaves on the root stack? What are the corresponding auto-equivalences?
    \item In our setup, the 2 cubic curves come from determinants of $3 \times 3$ matrices. If two quadratic forms give rise to the same cubic curves, i.e. if the root stacks are identical, is there any relation between the corresponding Enriques surfaces?
    \item What can be said about the Enriques surface if the Brauer class is trivial?
    \item Is the orbifold cohomology of the two sides related? There are no twisted sectors in the Enriques surface, but the root stack should have the cubic curves and their intersection points as twisted sectors.
\end{enumerate}

In another direction, direct generalizations of the tensor product construction of the Clifford algebras of invariant subspaces to other dimensions can be studied. For example, one can look at the  the double mirrors of a dimension $3$ intersection of $4$ quadrics in $\mathbb{P}^7$ quotient by free actions of finite groups \cite{Hua}.

\end{document}